\tikzset{
every node/.style={draw, circle, inner sep=2pt}
}
\newcommand{\rbf}[1]{\textbf{\color{red}#1}}
\newtheorem{theorem}{Theorem}[section]
\newtheorem{proposition}[theorem]{Proposition}
\newtheorem{corollary}[theorem]{Corollary}
\theoremstyle{definition}
\newtheorem{definition}[theorem]{Definition}
\newtheorem{remark}[theorem]{Remark}
\newtheorem{example}[theorem]{Example}
\newcommand{\trans}{^\top}
\newcommand{\oml}{{\bf m}}
\newcommand{\mat}{\operatorname{Mat}_n(\mathbb{R})}
\newcommand{\msym}{\operatorname{Sym}_n(\mathbb{R})}
\newcommand{\mskew}{\operatorname{Skew}_n(\mathbb{R})}
\newcommand{\calB}{\mathcal{B}}
\newcommand{\Rmn}{\mat}
\newcommand{\Rsn}{\msym}
\newcommand{\Rskewn}{\mskew}
\newcommand{\qual}{\mathcal{Q}}
\newcommand{\sptn}{P}
\newcommand{\mptn}{\mathcal{S}}
\newcommand{\mptncl}{\mathcal{S}^{\rm cl}}
\newcommand{\qualv}{\mathcal{Q}^{\rm v}}
\newcommand{\bzero}{{\bf 0}}
\newcommand{\bc}{{\bf c}}
\newcommand{\bd}{{\bf d}}
\newcommand{\bu}{{\bf u}}
\newcommand{\bw}{{\bf w}}
\newcommand{\rank}{\operatorname{rank}}
\newcommand{\range}{\operatorname{range}}
\newcommand{\spec}{\operatorname{spec}}
\newcommand{\tr}{\operatorname{tr}}
\newcommand{\vspan}{\operatorname{span}}
\newcommand{\piner}{\operatorname{pin}}
\newcommand{\iner}{\operatorname{in}}
\newcommand{\riner}{\operatorname{rin}}
\newcommand{\inp}[2]{\left\langle#1,#2\right\rangle}
\title{The bifurcation lemma for strong properties in the inverse eigenvalue problem of a graph}
\author{
Shaun M. Fallat
\thanks{Department of Mathematics and Statistics, University of Regina, Regina, SK, S4S 0A2,
Canada (shaun.fallat@uregina.ca).}
\and
H. Tracy Hall
\thanks{Hall Labs, LLC (h.tracy@gmail.com)}
\and
Jephian C.-H.~Lin
\thanks{Department of Applied Mathematics, National Sun Yat-sen University, Kaohsiung 80424, Taiwan (jephianlin@gmail.com)}
\and 
Bryan L. Shader
\thanks{Department of Mathematics, University of Wyoming, Laramie, WY 82071, USA (bshader@uwyo.edu)}
}
\date{\today}
\begin{document}

\maketitle

\begin{abstract}
The inverse eigenvalue problem of a graph studies the real symmetric matrices whose off-diagonal pattern is prescribed by the adjacencies of the graph.
The strong spectral property (SSP) is an important tool for this problem.
This note establishes the bifurcation lemma, which states that if a spectrum can be realized by a matrix with the SSP for some graph, then all the nearby spectra can also be realized by matrices with the SSP for the same graph.  The idea of the bifurcation lemma also works for other strong properties and for not necessarily symmetric matrices.  This is used to develop new techniques for verifying a spectrally arbitrary pattern or inertially arbitrary pattern.  The bifurcation lemma provides a unified theoretical foundation for several known results, such as the stable northeast lemma and the nilpotent-centralizer method.
\end{abstract}  

\noindent{\bf Keywords:} 
Inverse eigenvalue problem, strong properties, implicit function theorem,  inverse function theorem, bifurcation lemma, graphs.

\medskip

\noindent{\bf AMS subject classifications:}
05C50, 
15A18, 
15B35, 
15B57, 
58C15. 

\section{Introduction}
Let $G$ be a graph on $n$ vertices.  Define $\mptn(G)$ as the set of all $n\times n$ real symmetric matrices whose $(i,j)$-entry, for $i\neq j$, is nonzero if and only if $\{i,j\}$ is an edge.  Note that the diagonal entries can be any real number.  The inverse eigenvalue problem of a graph $G$ (IEP-$G$) asks what spectra can be realized over all matrices in $\mptn(G)$? 

The IEP-$G$ has become a central study for researchers interested in general aspects of the spectra of graphs or certain branches of combinatorial matrix theory. From the early beginnings of Parter and Weiner (see \cite{Parter60,Wiener84}), studying the behavior of eigenvalue multiplicities in trees and particular induced subgraphs, with mild extensions to include cycles, this subject has continued to expand, although a complete resolution seems rather distant. However, over the past few years, motivated in part by seminal work of Colin de Veredi\`ere \cite{CdV} incorporating a particular aspect of genericity (called the strong Arnol'd property), other similar properties have been developed with significant impact on recent advances to the IEP-$G$. These matrix properties, known as the strong spectral and strong multiplicity properties (see \cite{IEPG2, gSAP}), are defined below and have been used with considerable success on specific aspects of the IEP-$G$. These ``strong" properties allow one to preserve some spectral information under certain matrix perturbations, which ability has allowed new sorts of analysis on the IEP-$G$ by considering forbidden minor characterizations and some extremal problems involving related parameters including: the minimum rank, maximum multiplicity, and the minimum number of distinct eigenvalues of a graph.  See, e.g., \cite{IEPG2, gSAP} and the references therein.

A natural question is the following.  Let $G$ be a graph on $n$ vertices.  Suppose $A\in\mptn(G)$ has $\spec(A) = \Lambda$.  For a sequence $\Lambda'$ of $n$ real numbers such that $\|\Lambda - \Lambda'\|$ (treated as vectors) is small enough, is there a matrix $A'\in \mptn(G)$ such that $\spec(A') = \Lambda'$?   

The purpose of this note is to provide a positive answer to the above question by proving the so-called bifurcation lemma, utilizing the inverse function theorem.  The organization of the paper is as follows:  In Section~\ref{sec:invft}, we develop a version of the inverse function theorem for the purpose of proving the bifurcation lemma.  The statement of the bifurcation lemma and its proof will be given in Section~\ref{sec:bifur}, which leads to an immediate consequence known in \cite{AHLvdH13} as the stable northeast lemma.  Applications of the bifurcation lemma on the number of minimum distinct eigenvalues and on the ordered multiplicity lists of a cycle will be given in Section~\ref{sec:app}.  Finally, a version of the bifurcation lemma for non-symmetric matrices is given in Section~\ref{sec:nsym} and, as a consequence, a more flexible version of the nilpotent-centralizer method \cite{GB13} is presented and new methods for finding spectrally arbitrary and inertially arbitrary patterns are developed.  

In the remainder of this section, we set up some basic notation of matrices and review the definition and some propositions of the strong properties.  Let $A$ be a symmetric matrix and $\spec(A) = \{\lambda_1^{(m_1)}, \ldots, \lambda_q^{(m_q)}\}$ with $\lambda_1 < \cdots < \lambda_q$.  The \emph{ordered multiplicity list} of $A$ is the sequence $\oml(A) = (m_1,\ldots, m_q)$.  The number of distinct eigenvalues of $A$ is denoted as $q(A) = q$.  Let $n_+(A)$, $n_-(A)$, and $n_0(A)$ be the number of positive, negative, and zero eigenvalues of $A$, respectively.  Then the \emph{partial inertia} of $A$ is the pair $\piner(A) = (n_+(A), n_-(A))$.  When $A$ is an $n\times n$ matrix, the partial inertia is enough to determine the inertia $(n_+(A), n_-(A), n_0(A))$ since $n_+(A) + n_-(A) + n_0(A) = n$.  
Assuming $n$ if fixed, it follows that the partial inertia is equivalent to the inertia, while the partial inertia is more convenient for our purposes and is used in the literature, e.g., \cite{AHLvdH13,BHL09}.
The matrix norm used in this paper is the Frobenius norm $\|A\|^2 = \tr(A\trans\!A)$.

\subsection{Strong properties}
Let $\Rmn$ be the family of $n\times n$ real matrices.    Let $\calB_n$ be the set of matrices in $\Rmn$ with $\|A\|< 0.5$, noting that for all $L\in\calB_n$
the matrix $I + L$ is invertible.
Let $\Rsn$ and $\Rskewn$ be the families of real symmetric and real skew-symmetric matrices of order $n$, respectively.  Throughout the paper, we will view them as vector spaces equipped with the inner product
\[\inp{A}{B} = \tr(B\trans\!A).\]

Finally, for a given graph $G$, let $\mptncl(G)$ be the topological closure of $\mptn(G)$.  That is, 
\[\mptncl(G) = \{A = \begin{bmatrix} a_{ij} \end{bmatrix}\in \Rsn : a_{ij} = 0 \text{ if }ij\in E(\overline{G})\},\]
which is a subspace of $\Rsn$.  

Let $A\in\Rsn$ be a matrix and $q$ the number of distinct eigenvalues of $A$.  Then $A$ has the \emph{strong spectral property} (or the SSP) if $X = O$ is the only symmetric matrix satisfying 
\[A\circ X = O,\quad I \circ X = O,\quad \text{and }[A,X] = O.\]
Here $[A,X] = AX - XA$.  The matrix $A$ has the \emph{strong multiplicity property} (or the SMP) if $X = O$ is the only symmetric matrix satisfying 
\[A\circ X = O,\quad I \circ X = O,\quad [A,X] = O,\]
\[\text{and }\tr(A^kX) = 0\text{ for }k = 0,\ldots, q-1.\]
Finally, $A$ has the \emph{strong Arnol'd property} (or the SAP) if $X = O$ is the only symmetric matrix satisfying 
\[A\circ X = O,\quad I \circ X = O,\quad \text{and }AX = O.\]
The SAP was used in \cite{CdVF} (English translation \cite{CdV}) to define the Colin de Verdi\`ere parameter $\mu(G)$.  The SSP and the SMP were introduced in \cite{gSAP} for the study of the IEP-$G$.

Let $U$ and $W$ be vector subspaces of $\Rsn$.  Then $U\cap W = \{\bzero\}$ if and only if $U^\perp + W^\perp = \Rsn$.  We will use this fact to produce an equivalent condition of the strong properties mentioned above. 

Suppose $A$ is a matrix in $\mptn(G)$ with $q$ distinct eigenvalues.  We may calculate the orthogonal complements for the following subspaces.  Let
\[U = \{X\in\Rsn: A\circ X = O,\ I\circ X = O\}.\]
Then $U^\perp = \mptncl(G)$.  Additionally, for 
\[\begin{aligned}
 W_S &= \{X\in\Rsn: [A,X] = O\}, \\
 W_M &= \{X\in\Rsn: [A,X] = O\text{ and }\tr(A^kX) = 0\text{ for }k = 0,\ldots, q-1\}, \\
 W_A &= \{X\in\Rsn: AX = O\}, 
\end{aligned}\]
we have 
\[\begin{aligned}
 W_S^\perp &= \{K\trans\!A + AK: K \in \Rskewn\}, \\
 W_M^\perp &= \{K\trans\!A + AK: K\in\Rskewn\} + \vspan\{A^k: k = 0,\ldots, q-1\}, \\
 W_A^\perp &= \{L\trans\!A + AL: L \in \Rmn\}.\\
\end{aligned}\]
The calculation of these orthogonal complements can be found in, e.g., \cite[Theorem~27]{gSAP}.  (In fact, these are the tangent and normal spaces of certain manifolds, but in this note we demonstrate the result using elementary language.)
Thus, we have the following proposition.

\begin{proposition}
\label{prop:sxptan}
Let $G$ be a graph and $A\in\mptn(G)$ a matrix with $q$ distinct eigenvalues.  Then 
\begin{enumerate}
\item[{\rm (a)}] The matrix $A$ has the SSP if and only if 
\[\mptncl(G) +  \{K\trans\!A + AK: K\in\Rskewn\} = \Rsn.\]
\item[{\rm (b)}] The matrix $A$ has the SMP if and only if 
\[\mptncl(G) +  \{K\trans\!A + AK: K\in\Rskewn\} + \vspan\{A^k: k = 0,\ldots, q-1\} = \Rsn.\]
\item[{\rm (c)}] The matrix $A$ has the SAP if and only if 
\[\mptncl(G) +  \{L\trans\!A + AL: L\in\Rmn\} = \Rsn.\]
\end{enumerate}
\end{proposition}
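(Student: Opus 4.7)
The plan is to invoke the finite-dimensional duality principle flagged in the excerpt just before Proposition~\ref{prop:sxptan}: for subspaces $U, W$ of a finite-dimensional inner product space, $U \cap W = \{\bzero\}$ if and only if $U^\perp + W^\perp$ is the entire ambient space. The text has already done the conceptual work of (i) recasting each strong property as the triviality of an intersection of two explicit subspaces of $\Rsn$, and (ii) computing the orthogonal complements of all the subspaces involved. The proof of the proposition is then a matter of assembling these pieces.

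More precisely, I would set
\[ U = \{X \in \Rsn : A \circ X = O,\ I \circ X = O\}, \]
so that, directly from the definitions, the SSP, SMP, and SAP assert respectively that $U \cap W_S = \{\bzero\}$, $U \cap W_M = \{\bzero\}$, and $U \cap W_A = \{\bzero\}$. I would then apply the duality, justified by $(U^\perp + W^\perp)^\perp = U^{\perp\perp} \cap W^{\perp\perp} = U \cap W$, so that $U^\perp + W^\perp = \Rsn$ iff $U \cap W = \{\bzero\}$.

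The final step is to plug in the orthogonal complements. The subspace $U$ consists of symmetric matrices supported exactly on the off-diagonal non-edge positions of $G$, which is the orthogonal complement in $\Rsn$ of $\mptncl(G)$; hence $U^\perp = \mptncl(G)$. The formulas for $W_S^\perp$, $W_M^\perp$, and $W_A^\perp$ displayed in the excerpt (and cited from \cite[Theorem~27]{gSAP}) then yield the three equivalences (a), (b), (c).

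Since the bulk of the argument rests on the computation of $W_S^\perp, W_M^\perp, W_A^\perp$, which is imported, there is no real obstacle. For completeness, each can be verified directly in a few lines: for (a), the identity $K^\top A + AK = [A,K]$ for skew $K$ combined with $\tr([A,K]X) = -\tr(K[A,X])$, together with the fact that $[A,X]$ is skew when $A$ and $X$ are symmetric, shows the pairing vanishes for all skew $K$ iff $[A,X] = O$; for (c), symmetry of $A$ and $X$ gives $\langle L^\top A + AL, X\rangle = 2\tr(L^\top A X)$, which vanishes for all $L$ iff $AX = O$; and for (b), the identity $W_M = W_S \cap \vspan\{A^k : k = 0, \ldots, q-1\}^\perp$ yields $W_M^\perp = W_S^\perp + \vspan\{A^k : k = 0, \ldots, q-1\}$. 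Thus the proposition is essentially bookkeeping given the setup already laid out.
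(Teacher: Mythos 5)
Your argument reproduces the paper's own reasoning exactly: the text just before Proposition~\ref{prop:sxptan} reduces each strong property to a statement of the form $U \cap W = \{\bzero\}$, invokes the finite-dimensional duality $U \cap W = \{\bzero\} \iff U^\perp + W^\perp = \Rsn$, and then substitutes the orthogonal complements $U^\perp = \mptncl(G)$ and $W_S^\perp, W_M^\perp, W_A^\perp$ imported from \cite[Theorem~27]{gSAP}. The short inline verifications you add of those complement formulas (via $\tr([A,K]X) = -\tr(K[A,X])$, the skew-symmetry of $[A,X]$, $\langle L^\top A + AL, X\rangle = 2\tr(L^\top AX)$, and $W_M = W_S \cap \vspan\{A^k\}^\perp$) are correct and make the argument self-contained, but they do not change the route.
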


\section{Inverse function theorem}
\label{sec:invft}

In this section, we revisit the classical inverse function theorem and construct a version that is suitable for our purpose.  

Let $F(\bu)$ be a function from an open subset of a vector space $U$ to another vector space $W$.
Recall that a function is \emph{smooth} if it is infinitely differentiable throughout its domain.  
Let $\bu_0$ be a point in the domain $F(\bu_0) = \bw_0$.  Note that the tangent space around $\bu_0$ in $U$ and the tangent space around $\bw_0$ in $W$ are $U$ and $W$, respectively.  The \emph{derivative} of $F$ at $\bu_0$, denoted as $\dot{F}\Big|_{\bu = \bu_0}$, is a linear operator 
\[\bd \mapsto \lim_{t\rightarrow 0}\frac{F(\bu_0 + t\bd) - F(\bu_0)}{t}\]
from the tangent space of $U$ to the tangent space of $W$ that takes a direction $\bd$ and outputs the directional derivative.  When both $U$ and $W$ are finite-dimensional, it is easy to check if $\dot{F}\Big|_{\bu = \bu_0}$ is injective, surjective, or invertible since it is linear.  In particular, when $\dim U = \dim  W < \infty$, we know one of the three properties is enough to imply the other two.

\begin{example}
Let $U = \Rskewn$ and $W = \Rmn$.  Then the function $F(K) = e^{K}$ from an open subset of $U$ to $W$ has 
\[\dot{F}\Big|_{K = O} = K,\]
since 
\[\begin{aligned}
\lim_{t\rightarrow 0}\frac{e^{O+Kt} - e^{O}}{t} &= 
\lim_{t\rightarrow 0}\frac{1}{t}\left[\frac{(Kt)^0}{0!} + \frac{(Kt)^1}{1!} + \frac{(Kt)^2}{2!} + \frac{(Kt)^3}{3!} + \cdots - I\right] \\
 &= 
\lim_{t\rightarrow 0}\left[\frac{K^1}{1!} + \frac{K^2t^1}{2!} + \frac{K^3t^2}{3!} + \cdots \right] = K.\\
\end{aligned}\]
\end{example}

\begin{example}
Let $U = \calB_n$ and $W = \Rmn$.  Let $F: U\rightarrow W$ be a function defined by $F(L) = (I + L)^{-1}$.  Then 
\[\dot{F}\Big|_{L = O} = -L\]
since 
\[\begin{aligned}
\lim_{t\rightarrow 0}\frac{(I + Lt)^{-1} - I}{t} &= 
\lim_{t\rightarrow 0}\frac{1}{t}\left[I - Lt + (Lt)^2 - (Lt)^3 + \cdots - I\right] \\
 &= 
\lim_{t\rightarrow 0}\left[- L + L^2 t - L^3t^2 + \cdots \right] = -L.\\
\end{aligned}\]
\end{example}

Theorem~\ref{thm:invft} is one of the common statements of the inverse function theorem, which can be found in, e.g., \cite[Theorem~1.12]{GMM13}. 

\begin{theorem}[Inverse function theorem]
\label{thm:invft}
Let $U$ and $W$ be finite-dimensional vector spaces over $\mathbb{R}$ with $\dim U = \dim W$.  
Let $F$ be a smooth function from an open subset of $U$ to $W$ with $F(\bu_0) = \bw_0$.  
If $\dot{F}\Big|_{\bu = \bu_0}$ is invertible, then there are an open subset $W'\subseteq W$ containing $\bw_0$ and a smooth function $T:W'\rightarrow U$ such that $T(\bw_0) = \bu_0$ and $F\circ T$ is the identity map on $W'$.  
\end{theorem}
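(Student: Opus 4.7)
The plan is to prove this by the classical contraction-mapping argument, which is the standard route to the inverse function theorem in finite dimensions. First I would normalize the situation: by picking bases of $U$ and $W$, identify both with $\mathbb{R}^n$ where $n = \dim U = \dim W$. Then replace $F$ by the composition $\bu \mapsto A^{-1}(F(\bu_0 + \bu) - \bw_0)$, where $A = \dot{F}\big|_{\bu = \bu_0}$. This reduction sends $\bzero \mapsto \bzero$ and makes the derivative at the origin equal to the identity, so it suffices to prove the theorem under these normalizations. The original $T$ is then recovered by undoing the linear and translational changes, all of which are smooth and bijective.

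Next, for each $\bw$ in a small neighborhood of $\bzero$ in $W$, I would define the auxiliary map
\[ G_\bw(\bu) = \bu - F(\bu) + \bw. \]
A fixed point of $G_\bw$ is exactly a point $\bu$ with $F(\bu) = \bw$, so finding $T(\bw)$ reduces to finding a fixed point. Since $\dot{F}\big|_\bzero = I$, the derivative of $\bu \mapsto \bu - F(\bu)$ vanishes at $\bzero$, so by continuity of the derivative there is a closed ball $\overline{B_r(\bzero)} \subset U$ on which this map is Lipschitz with constant $\tfrac{1}{2}$. Then for $\|\bw\| < r/2$, the map $G_\bw$ carries $\overline{B_r(\bzero)}$ into itself and is a contraction with ratio $\tfrac{1}{2}$. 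The Banach fixed-point theorem produces a unique fixed point $\bu = T(\bw)$ depending on $\bw$, giving $F(T(\bw)) = \bw$, and the contraction estimate yields continuity (indeed Lipschitz continuity) of $T$. Restricting $W'$ to the open set $\{\bw : \|\bw\| < r/2\}$ gives the required neighborhood.

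The main obstacle will be upgrading $T$ from merely continuous to smooth. The natural step is to show $T$ is differentiable at each $\bw \in W'$ with derivative $(\dot{F}\big|_{T(\bw)})^{-1}$. To do this I would use that $\dot{F}$ is continuous (since $F$ is smooth), so $\dot{F}\big|_\bu$ remains invertible for all $\bu$ near $\bzero$, and then estimate
\[ T(\bw + \bh) - T(\bw) - (\dot{F}\big|_{T(\bw)})^{-1}\bh \]
using the relation $F(T(\bw + \bh)) - F(T(\bw)) = \bh$ and a first-order Taylor expansion of $F$. Once differentiability and the formula for $\dot{T}$ are established, smoothness follows by a standard bootstrap: $\dot{T}$ is a composition of $T$, the smooth map $\bu \mapsto \dot{F}\big|_\bu$, and the smooth inversion map on invertible linear operators, so inductively $T \in C^k$ implies $T \in C^{k+1}$.

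The delicate part throughout is keeping track of the two metrics (on $U$ and on $W$) simultaneously and choosing the radius $r$ small enough that both the contraction estimate and the invertibility of $\dot{F}$ hold uniformly. Everything else is routine, and the conclusion $F \circ T = \id_{W'}$ together with $T(\bzero) = \bzero$ (and hence $T(\bw_0) = \bu_0$ after undoing the normalization) is immediate from the fixed-point property.
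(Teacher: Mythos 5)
The paper does not actually prove Theorem~\ref{thm:invft}; it states it as a known result and cites \cite[Theorem~1.12]{GMM13}, so there is no in-paper argument against which to compare your proposal. What you have sketched is the standard contraction-mapping proof of the inverse function theorem, and it is correct in all essentials. The normalization to $\bu_0 = \bzero$, $\bw_0 = \bzero$, $\dot{F}\big|_{\bzero} = I$ via an affine change on the source and a linear change on the target is the right reduction, and the auxiliary map $G_\bw(\bu) = \bu - F(\bu) + \bw$ is the correct fixed-point reformulation of $F(\bu) = \bw$. The estimate that $\bu \mapsto \bu - F(\bu)$ is $\tfrac12$-Lipschitz on a small closed ball follows from the vanishing of its derivative at $\bzero$, continuity of $\dot F$, and the mean value inequality on the (convex) ball; the self-mapping of $\overline{B_r(\bzero)}$ for $\|\bw\| < r/2$ then uses $F(\bzero)=\bzero$, and Banach's fixed-point theorem delivers $T$. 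Your Lipschitz estimate for $T$ and the differentiability argument via $F(T(\bw+\bh)) - F(T(\bw)) = \bh$ together with the Neumann-series continuity of matrix inversion give $\dot T = (\dot F\circ T)^{-1}$, after which the $C^k \Rightarrow C^{k+1}$ bootstrap gives smoothness. One minor point worth making explicit is that you should shrink $r$ in advance so that $\overline{B_r(\bzero)}$ lies inside the domain of $F$ and so that $\dot F$ is invertible throughout it (both are possible by openness and continuity); you mention the latter only at the differentiability stage. Since the theorem as stated only asks for a smooth right inverse $T$ with $F\circ T = \id_{W'}$ (not that $T$ be a diffeomorphism onto an open image), your construction proves exactly what is claimed, and you correctly note that undoing the affine normalizations recovers $T(\bw_0)=\bu_0$ in the original coordinates.
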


The condition $\dim U = \dim W$ is necessary so that the derivative can possibly be invertible.  However, the inverse function theorem still works when $U$ and $W$ have different dimensions and the derivative is surjective.  Here we state another version of the inverse function theorem and include the proof for completeness.

\begin{theorem}
\label{thm:invftsur}
Let $U$ and $W$ be finite-dimensional vector spaces over $\mathbb{R}$.  
Let $F$ be a smooth function from an open subset of $U$ to $W$ with $F(\bu_0) = \bw_0$.  
If $\dot{F}\Big|_{\bu = \bu_0}$ is surjective, then there are an open subset $W'\subseteq W$ containing $\bw_0$ and a smooth function $T:W'\rightarrow U$ such that $T(\bw_0) = \bu_0$ and $F\circ T$ is the identity map on $W'$. 
\end{theorem}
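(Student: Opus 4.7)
The plan is to reduce the surjective case to the ordinary inverse function theorem (Theorem~\ref{thm:invft}) by restricting $F$ to an affine subspace of $U$ through $\bu_0$ on which the derivative becomes bijective. Surjectivity gives the right amount of freedom: we can cut down the domain until the source and target have matching dimension without losing the ability to hit every nearby point of $W$.

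Concretely, first set $K = \ker\dot{F}\big|_{\bu = \bu_0}$ and pick any linear complement $V \subseteq U$ with $U = K \oplus V$. Because $\dot{F}\big|_{\bu = \bu_0}\colon U \to W$ is surjective with kernel $K$, its restriction to $V$ is a linear bijection onto $W$; in particular $\dim V = \dim W < \infty$. Next, define $G\colon V' \to W$ by $G(\bv) = F(\bu_0 + \bv)$, where $V' \subseteq V$ is the open neighborhood of $\bzero$ for which $\bu_0 + \bv$ lies in the domain of $F$. Then $G$ is smooth, $G(\bzero) = \bw_0$, and by the definition of the directional derivative, $\dot{G}\big|_{\bv = \bzero}(\bd) = \dot{F}\big|_{\bu = \bu_0}(\bd)$ for every $\bd \in V$. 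Thus $\dot{G}\big|_{\bv = \bzero}$ is the invertible restriction identified in the previous step.

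Then apply Theorem~\ref{thm:invft} to $G$ (whose source and target now have equal, finite dimension) to obtain an open set $W' \subseteq W$ containing $\bw_0$ and a smooth $S\colon W' \to V$ with $S(\bw_0) = \bzero$ and $G \circ S = \id_{W'}$. Define $T\colon W' \to U$ by $T(\bw) = \bu_0 + S(\bw)$. This $T$ is smooth, satisfies $T(\bw_0) = \bu_0$, and
\[
F(T(\bw)) = F(\bu_0 + S(\bw)) = G(S(\bw)) = \bw \quad \text{for all } \bw \in W',
\]
so $F \circ T = \id_{W'}$, as required.

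There is no serious obstacle here: the argument is essentially bookkeeping on top of Theorem~\ref{thm:invft}. The only point that deserves a moment's care is the identification of $\dot{G}\big|_{\bv = \bzero}$ with the restriction of $\dot{F}\big|_{\bu = \bu_0}$ to $V$, which follows from the limit definition of the derivative along directions $\bd \in V$ and ensures that we are in a position to invoke the square version of the inverse function theorem.
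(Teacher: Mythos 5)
Your proof is correct and follows essentially the same route as the paper: pick a linear complement of $\ker\dot{F}\big|_{\bu=\bu_0}$ (the paper uses the orthogonal complement, you allow any complement), restrict $F$ to the corresponding affine slice through $\bu_0$ so that the derivative becomes a bijection between spaces of equal dimension, and then invoke Theorem~\ref{thm:invft}. Your reparametrization $G(\bv) = F(\bu_0 + \bv)$ is a slightly cleaner way to express the paper's restriction $\tilde{F}$ to $\bu_0 + U'$, since it makes the domain an honest open subset of a vector space, but there is no substantive difference.
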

\begin{proof}
Let $\dot{F} = \dot{F}\Big|_{\bu = \bu_0}$.  Let $U'$ be the orthogonal complement of the kernel $\ker(\dot{F})$.  Since $\dot{F}$ is surjective, the restriction of $\dot{F}$ on $U'$ is an invertible operator.  Let $\bu_0 + U'$ be the affine subspace 
\[\{\bu_0 + \bu: \bu \in U'\}.\]
Let $\tilde{F}$ be the restriction of $F$ on the intersection of $U'$ and the domain of $F$.  Thus, $\tilde{F}$ is a function whose domain and codomain are vector spaces of the same dimension, and $\dot{\tilde{F}}\Big|_{\bu = \bu_0}$
equals the restriction of $\dot{F}$ on $U'$, which is invertible.  By Theorem~\ref{thm:invft}, there is an open subset $W'\subseteq W$ containing $\bw_0$ and a smooth function $T:W'\rightarrow \bu_0 + U' \subseteq U$ such that $T(\bw_0) = \bu_0$ and $\tilde{F}\circ T$ is the identity map on $W'$.  Since $\tilde{F}$ is a restriction of $F$, naturally $F\circ T$ is also the identity map on $W'$.
\end{proof}

\section{Bifurcation lemma}
\label{sec:bifur}

We establish the so-called bifurcation lemma in this section.  Before this verification, we observe that Proposition~\ref{prop:sxpsurj} demonstrates that each strong property is equivalent to the derivative of a certain perturbation function being surjective.  


\begin{proposition}
\label{prop:sxpsurj}
Let $G$ be a graph on $n$ vertices and $A\in\mptn(G)$ with $q$ distinct eigenvalues.
\begin{enumerate}
\item[{\rm (a)}] For the function $F:\mptncl(G)\times\Rskewn\rightarrow\Rsn$ defined by 
\[F(B,K) = e^{-K}(A + B)e^K,\]
the derivative $\dot{F}\Big|_{\substack{B = O\\K = O}}$ is surjective if and only if $A$ has the SSP.
\item[{\rm (b)}] For the function $F:\mptncl(G)\times\Rskewn\times\mathbb{R}^q\rightarrow\Rsn$ defined by 
\[F(B,K,\bc) = e^{-K}p(A + B)e^K,\]
\[
\text{where }\bc = (c_0,\ldots, c_{q-1})\text{ and }p(x) = x + \sum_{k=0}^{q-1}c_kx^k,
\]
the derivative $\dot{F}\Big|_{\substack{B = O\\K = O\\\bc = \bzero\\}}$ is surjective if and only if $A$ has the SMP.

\item[{\rm (c)}] For the function $F:\mptncl(G)\times\calB_n\rightarrow\Rsn$ defined by 
\[F(B,L) = (I + L)\trans(A + B)(I + L),\]
the derivative $\dot{F}\Big|_{\substack{B = O\\L = O\\}}$ is surjective if and only if $A$ has the SAP.
\end{enumerate}
\end{proposition}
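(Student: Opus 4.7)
The plan is to directly compute the derivative $\dot F$ at the indicated base point in each of the three cases, identify its image as the sum of subspaces featured in Proposition~\ref{prop:sxptan}, and invoke that proposition to match surjectivity with the corresponding strong property. A preliminary check is that each $F$ actually lands in $\Rsn$: in (a) and (b), since $K$ is skew-symmetric we have $(e^K)\trans = e^{-K}$, so conjugation of the symmetric matrix $A+B$ (resp.\ $p(A+B)$) by $e^K$ preserves symmetry; in (c), the map $X\mapsto M\trans X M$ manifestly preserves symmetry.

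For (a), expand $e^{\pm tK} = I \pm tK + O(t^2)$ to obtain
\[F(tB, tK) = A + t\bigl(B + AK - KA\bigr) + O(t^2),\]
so $\dot F\big|_{(O,O)}(B,K) = B + (AK - KA) = B + (K\trans A + AK)$, using $K\trans = -K$. As $B$ ranges over $\mptncl(G)$ and $K$ over $\Rskewn$, the image of $\dot F$ is exactly $\mptncl(G) + \{K\trans A + AK : K\in\Rskewn\}$, which equals $\Rsn$ iff $A$ has the SSP by Proposition~\ref{prop:sxptan}(a).

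For (b), note that $p(A+B) = (A+B) + \sum_{k=0}^{q-1} c_k (A+B)^k$, so expanding to first order gives $p_t(A + tB) = A + t\bigl(B + \sum c_k A^k\bigr) + O(t^2)$. Combining with the same conjugation expansion yields
\[\dot F\big|_{(O,O,\bzero)}(B,K,\bc) = B + \sum_{k=0}^{q-1} c_k A^k + (K\trans A + AK),\]
whose image is $\mptncl(G) + \{K\trans A + AK : K\in\Rskewn\} + \vspan\{A^k : k = 0,\ldots,q-1\}$, matching the SMP condition in Proposition~\ref{prop:sxptan}(b). For (c), $(I + tL)\trans(A + tB)(I + tL) = A + t(B + L\trans A + AL) + O(t^2)$ immediately gives $\dot F\big|_{(O,O)}(B,L) = B + (L\trans A + AL)$, and the image is $\mptncl(G) + \{L\trans A + AL : L\in\Rmn\}$, which by Proposition~\ref{prop:sxptan}(c) equals $\Rsn$ iff $A$ has the SAP.

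There is no real obstacle here; the whole content lies in the bookkeeping of the first-order Taylor expansions and the observation that the image of a linear map from a product is the sum of the images of its partial maps. The mildly delicate point is the sign bookkeeping in part (a), where one must rewrite $AK - KA$ as $K\trans A + AK$ via skew-symmetry in order to line up with the form used in Proposition~\ref{prop:sxptan}; once that identification is made, each case reduces to a one-line appeal to the corresponding part of Proposition~\ref{prop:sxptan}.
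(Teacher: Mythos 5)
Your proposal is correct and takes essentially the same approach as the paper: both compute the derivative of each $F$ at the base point, identify its image as $\mptncl(G)$ plus the appropriate $\{K\trans A+AK\}$ (or $\{L\trans A+AL\}$, plus $\vspan\{A^k\}$) term, and then appeal to Proposition~\ref{prop:sxptan}. The only stylistic difference is that you compute the total derivative directly via a first-order Taylor expansion, while the paper records the partial derivatives $F_B$, $F_K$, $F_{\bc}$, $F_L$ separately and sums their ranges.
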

\begin{proof}
We show that the partial derivatives of each function correspond to the spaces related to each of the strong properties.

\noindent\textbf{Case (a).}  Let $\dot{F} = \dot{F}\Big|_{\substack{B = O\\K = O}}$.  Let $F_B = \frac{\partial F}{\partial B}\Big|_{\substack{B = O\\K = O}}$ and $F_K = \frac{\partial F}{\partial K}\Big|_{\substack{B = O\\K = O}}$ be the corresponding partial derivatives.  Thus, 
\[\range(\dot{F}) = \range(F_B) + \range(F_K)\]
and $\dot{F}$ is surjective if and only if $\range(F_B) + \range(F_K) = \Rsn$.  Observe that $\range(F_B) = \mptncl(G)$ and 
\[\range(F_K) = \{K\trans\!A + AK: K \in \Rskewn\}.\]
Thus, $\range(F_B) + \range(F_K) = \Rsn$ if and only if $A$ has the SSP by Proposition~\ref{prop:sxptan}.

\noindent\textbf{Case (b).}  Similarly, let $\dot{F} = \dot{F}\Big|_{\substack{B = O\\K = O\\\bc=\bzero}}$ and let $F_B$, $F_K$, and $F_{\bc}$ be the corresponding partial derivatives at $B = O$, $K = O$, and $\bc = \bzero$.  By direct computation, we have $\range(F_B) = \mptncl(G)$, 
\[\range(F_K) = \{K\trans\!A + AK: K \in \Rskewn\},\]
and 
\[\range(K_{\bc}) = \vspan\{A^k: k = 0,\ldots, q-1\}.\]
By Proposition~\ref{prop:sxptan}, $\dot{F}$ is surjective if and only if $A$ has the SMP.

\noindent\textbf{Case (c).}  Following the same workflow and with $F_B$ and $F_L$ being the corresponding partial derivatives at $B = O$ and $L = O$, we arrive at the conclusion by observing $\range(F_B) = \mptncl(G)$ and 
\[\range(F_L) = \{L\trans\!A + AL : L \in \Rmn\}.\]
\end{proof}

Note that the functions in Proposition~\ref{prop:sxpsurj} are all compositions of perturbations.  The function $B\mapsto A+B$ is a perturbation of $A$ that does not change the pattern (when $B$ is small enough).  The function $K\mapsto e^{-K}Me^K$ does not change the spectrum of $M$, the function $(K,\bc)\mapsto e^{-K}p(M)e^K$ does not change the ordered multiplicity list, and the function $L\mapsto (I+L)\trans M(I+L)$ preserves the inertia (and rank) of $L$.

\begin{theorem}[Bifurcation lemma]
\label{thm:bifur}
Let $G$ be a graph on $n$ vertices and $A\in\mptn(G)$.  If $A$ has the 
\begin{enumerate}
\item[\rm (a)] SSP,
\item[\rm (b)] SMP, or 
\item[\rm (c)] SAP,
\end{enumerate}
respectively, then there is $\epsilon > 0$ such that for any $M\in\Rsn$ with $\|M - A\| < \epsilon$, a matrix $A'\in\mptn(G)$ exists such that 
\begin{enumerate}
\item[\rm (a)] $\spec(A')=\spec(M)$ and $A'$ has the SSP, 
\item[\rm (b)] $\oml(A')=\oml(M)$ and $A'$ has the SMP, or
\item[\rm (c)] $\piner(A')=\piner(M)$ and $A'$ has the SAP, 
\end{enumerate}
respectively.
\end{theorem}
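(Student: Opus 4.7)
My plan is to apply Theorem~\ref{thm:invftsur} to the perturbation functions $F$ appearing in Proposition~\ref{prop:sxpsurj}, exploiting that Proposition~\ref{prop:sxpsurj} already translates each strong property into the statement that $\dot F$ at the base point is surjective. In each case I would fix $F$ with base point $\bu_0$ (namely $(O,O)$ in (a) and (c), and $(O,O,\bzero)$ in (b)), observe $F(\bu_0) = A$, and invoke Theorem~\ref{thm:invftsur} to produce an open neighborhood $W' \subseteq \Rsn$ of $A$ together with a smooth right inverse $T : W' \to U$ satisfying $T(A) = \bu_0$ and $F \circ T = \id_{W'}$. I would then choose $\epsilon$ so that the open $\epsilon$-ball around $A$ lies inside $W'$.

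Given $M$ with $\|M - A\| < \epsilon$, I would set $A' = A + B$, where $B$ is the first coordinate of $T(M)$. The identity $F(T(M)) = M$ supplies the desired spectral comparison: in case (a), $M = e^{-K}(A + B)e^K$ is a similarity, so $\spec(A') = \spec(M)$; in case (c), $M = (I+L)\trans (A+B)(I+L)$ is a congruence, so Sylvester's law gives $\piner(A') = \piner(M)$; in case (b), $M = e^{-K} p(A+B) e^K$ shows $M$ similar to $p(A')$, so $\spec(M) = \{p(\lambda) : \lambda \in \spec(A')\}$ with matched multiplicities. Because $T$ is continuous with $T(A) = (O, O, \bzero)$, I can shrink $\epsilon$ so that the coefficient vector $\bc$ of $p$ is small enough that $p'$ stays close to $1$ on an interval containing $\spec(A')$; then $p$ is strictly increasing on $\spec(A')$, and $\oml(A') = \oml(p(A')) = \oml(M)$.

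Two loose ends remain. First, to see that $A' \in \mptn(G)$: since $B \in \mptncl(G)$ by construction, $A'$ vanishes on all non-edges of $G$; and by shrinking $\epsilon$ I can ensure $\|B\|$ is less than the smallest absolute value of any off-diagonal entry of $A$ supported on an edge of $G$, keeping those entries nonzero in $A'$. Second, to see that $A'$ inherits the relevant strong property: by Proposition~\ref{prop:sxptan}, each strong property is equivalent to surjectivity of a linear map whose defining data depends smoothly on the underlying symmetric matrix, and surjectivity is an open condition since matrix rank is lower semicontinuous, so a further shrinkage of $\epsilon$ transfers the property from $A$ to $A'$. I expect the main obstacle to be case (b), where I must juggle the perturbation polynomial $p$ and the eigenvalues of $A'$ simultaneously; the argument is still purely a continuity one, but unlike the other cases it depends on verifying that a small perturbation of the identity polynomial is strictly increasing on a nearby finite set, which is where the explicit use of continuity of the spectrum enters.
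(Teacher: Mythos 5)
Your proof is correct and follows the same route as the paper: reduce each part to surjectivity of the derivative of the perturbation function from Proposition~\ref{prop:sxpsurj}, invoke Theorem~\ref{thm:invftsur} for a local smooth right inverse $T$, and recover $A'$ from the $\mptncl(G)$-component of $T(M)$. The paper writes out only case (a) and declares (b) and (c) ``similar,'' so your Sylvester argument for (c) and your check that a small polynomial perturbation $p$ stays strictly increasing on $\spec(A')$ for (b) simply fill in details the authors omit; one small point worth adding for (b) is that $q(A')\ge q(A)$ for nearby $A'$, so surjectivity of the map built with $q(A)$ powers of $A'$ (which is an open condition) already implies the SMP of $A'$ even if $q(A')$ is larger.
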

\begin{proof}
We prove the case for the SSP only, since the other cases are similar.  Let $F$ be the function in Proposition~\ref{prop:sxpsurj}(a) and $\dot{F}$ its derivative at zero.  Since $A$ has the SSP, $\dot{F}$ is surjective.  By the inverse function theorem (Theorem~\ref{thm:invftsur}), there are an open subset $W'\subseteq\Rsn$ containing $A$ and a smooth function $T$ such that $T(A) = (O,O)$ and $F\circ T$ is the identity map on $W'$.  Thus, for any $M\in W'$, we may let $(B',K') = T(M)$.  Then
\[F(B', K') = e^{-K'}(A + B')e^{K'} = F(T(M)) = M,\]
which implies $A+B'$ and $M$ have the same spectrum since $e^{K'}$ is an orthogonal matrix and $e^{-K'}$ is its inverse.
Therefore, when $\|M - A\|$ is sufficiently small, $A' = A + B'$ is a matrix in $\mptn(G)$ and $\spec(A') = \spec(M)$.  Moreover, the derivative of $F$ at $B = B'$ and $K = K'$ remains surjective when the perturbation is small enough, so $A'$ has the SSP as well.  
The other cases can be obtained by choosing an appropriate function in Proposition~\ref{prop:sxpsurj}.
\end{proof}

Let $A$ be a real symmetric matrix and suppose $QDQ\trans$ is its diagonalization with $Q$ an orthogonal matrix.  Since the Frobenius norm is unitarily invariant,  
\[
\|QD'Q\trans - QDQ\trans\|^2 =  \|D' - D\|^2. \]
Thus, $M$ can be chosen as $QD'Q\trans$ for any $D'$ with $\|D' - D\| < \epsilon$.  
By applying the bifurcation lemma to a matrix $M$ with the nearby spectrum, ordered multiplicity list, or inertia, the following corollaries are immediate.

\begin{corollary}
\label{cor:bifurssp}
Let $G$ be a graph and $A\in\mptn(G)$ a matrix with the SSP and $\spec(A) = \{\lambda_1,\ldots,\lambda_n\}$.  Then there is $\epsilon > 0 $ such that for any $\mu_1,\ldots,\mu_n$ with $|\mu_i - \lambda_i| < \epsilon$ for all $i = 1,\ldots,n$, a matrix $A'\in\mptn(G)$ exists with the SSP and $\spec(A') = \{\mu_1,\ldots,\mu_n\}$. 
\end{corollary}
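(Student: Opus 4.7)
The plan is to derive this corollary as a direct packaging of Theorem~\ref{thm:bifur}(a); the preceding paragraph in the paper already does most of the bookkeeping. What I need to supply is a matrix $M$ with prescribed spectrum $\{\mu_1,\ldots,\mu_n\}$ that lies sufficiently close to $A$ in Frobenius norm so the bifurcation lemma applies.

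First I would diagonalize $A = QDQ\trans$ with $Q$ orthogonal and $D = \operatorname{diag}(\lambda_1,\ldots,\lambda_n)$. Given the target reals $\mu_1,\ldots,\mu_n$, I set $D' = \operatorname{diag}(\mu_1,\ldots,\mu_n)$ and define $M = QD'Q\trans$; then $M$ is symmetric with $\spec(M) = \{\mu_1,\ldots,\mu_n\}$. By unitary invariance of the Frobenius norm (as recorded immediately before the corollary statement),
\[
\|M - A\|^2 \;=\; \|QD'Q\trans - QDQ\trans\|^2 \;=\; \|D' - D\|^2 \;=\; \sum_{i=1}^n (\mu_i - \lambda_i)^2.
\]

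Next I would invoke Theorem~\ref{thm:bifur}(a): since $A$ has the SSP, there exists $\epsilon_0 > 0$ such that whenever $\|M - A\| < \epsilon_0$ one can find $A' \in \mptn(G)$ with the SSP and $\spec(A') = \spec(M)$. Setting $\epsilon = \epsilon_0/\sqrt{n}$, the hypothesis $|\mu_i - \lambda_i| < \epsilon$ for every $i$ yields $\|M - A\| < \epsilon_0$ via the displayed identity, so the bifurcation lemma produces the required $A'$ with $\spec(A') = \{\mu_1,\ldots,\mu_n\}$ and the SSP.

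There is no real obstacle here; the only minor point worth stating carefully is that we are translating between a coordinatewise bound on the eigenvalue perturbations and the single Frobenius-norm hypothesis that Theorem~\ref{thm:bifur} requires, which is exactly what the factor $\sqrt{n}$ accounts for. Everything else is quoted directly from the bifurcation lemma.
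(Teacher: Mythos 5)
Your proposal matches the paper's own argument exactly: both diagonalize $A = QDQ\trans$, set $M = QD'Q\trans$ with $D' = \operatorname{diag}(\mu_1,\ldots,\mu_n)$, use unitary invariance of the Frobenius norm to control $\|M-A\|$, and then invoke Theorem~\ref{thm:bifur}(a). The only thing you add is the explicit $\epsilon = \epsilon_0/\sqrt{n}$ conversion between the coordinatewise bound and the Frobenius bound, which the paper leaves implicit; this is correct and a harmless bit of extra care.
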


\begin{corollary}
\label{cor:bifursmp}
Let $G$ be a graph and $A\in\mptn(G)$ a matrix with the SMP and $\oml(A) = \oml$.  Then for any refinement $\oml'$ of $\oml$, there is a matrix $A'\in\mptn(G)$ with the SMP and $\oml(A') = \oml'$. 
\end{corollary}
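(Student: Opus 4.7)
The plan is to reduce the statement directly to Theorem~\ref{thm:bifur}(b) by constructing, for any prescribed refinement $\oml'$ of $\oml$, a symmetric matrix $M$ whose ordered multiplicity list is exactly $\oml'$ and whose distance to $A$ is as small as desired. Since Theorem~\ref{thm:bifur}(b) furnishes some $\epsilon>0$ with the property that every $M$ within $\epsilon$ of $A$ can be matched (in $\mptn(G)$ and with the SMP) in ordered multiplicity list, producing such an $M$ with $\oml(M)=\oml'$ and $\|M-A\|<\epsilon$ is enough.

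First I would write $A = QDQ\trans$, where $D=\operatorname{diag}(\lambda_1,\ldots,\lambda_1,\lambda_2,\ldots,\lambda_2,\ldots,\lambda_q,\ldots,\lambda_q)$ with each $\lambda_i$ repeated $m_i$ times and $\lambda_1<\cdots<\lambda_q$. A refinement $\oml'$ of $\oml=(m_1,\ldots,m_q)$ amounts to splitting each block $m_i$ into parts $m_{i,1},m_{i,2},\ldots,m_{i,s_i}$ with $\sum_j m_{i,j}=m_i$. To realize $\oml'$, I would form $D'$ by replacing the $m_i$ consecutive diagonal entries equal to $\lambda_i$ by $m_{i,1}$ copies of $\lambda_i$, followed by $m_{i,2}$ copies of $\lambda_i+\delta$, then $m_{i,3}$ copies of $\lambda_i+2\delta$, and so on, for some $\delta>0$. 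For $\delta$ sufficiently small, all perturbed values in the $i$th block still lie strictly between $\lambda_{i-1}$ and $\lambda_{i+1}$, so the sorted list of distinct eigenvalues of $D'$ consists precisely of the introduced values in the prescribed order, and $\oml(D')=\oml'$ by construction.

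Set $M=QD'Q\trans$. Because the Frobenius norm is unitarily invariant (as noted just before the corollary), $\|M-A\|=\|D'-D\|$, which is bounded above by a constant multiple of $\delta$ and hence can be made smaller than the $\epsilon$ delivered by Theorem~\ref{thm:bifur}(b) for $A$. Applying Theorem~\ref{thm:bifur}(b) to this $M$ yields $A'\in\mptn(G)$ with the SMP and $\oml(A')=\oml(M)=\oml'$, as desired.

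There is essentially no hard step here: the entire content lies in the bifurcation lemma, and the only thing to verify carefully is that the perturbation of the eigenvalues can be arranged to produce exactly the refinement $\oml'$ without any accidental coincidences or reorderings across blocks. This is handled by choosing $\delta$ small enough so that the spread inside each original eigenvalue block stays well within the gap to the neighboring original eigenvalues, which is possible precisely because $\lambda_1<\cdots<\lambda_q$ are strictly separated.
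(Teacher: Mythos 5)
Your proof is correct and matches the paper's argument: both diagonalize $A = QDQ\trans$, perturb the diagonal matrix to a nearby $D'$ realizing $\oml'$, set $M = QD'Q\trans$, and invoke Theorem~\ref{thm:bifur}(b) via the unitary invariance of the Frobenius norm. Your explicit construction of $D'$ (splitting each eigenvalue block by adding successive multiples of a small $\delta$) simply spells out the detail the paper leaves implicit in the sentence preceding the corollary.
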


Corollary~\ref{cor:bifursappin} is known as the stable northeast lemma for graphs, which was developed in \cite[Corollary~2]{AHLvdH13}.  It is also an immediate consequence of the bifurcation lemma.

\begin{corollary}
\label{cor:bifursappin}
Let $G$ be a graph on $n$ vertices and $A\in\mptn(G)$ a matrix with the SAP such that $\piner(A) = (a,b)$ with $a + b < n$.  Then each of the partial inertia $(a+1,b)$ and $(a,b+1)$ can be achieved by a matrix $A'\in\mptn(G)$ with the SAP. 
\end{corollary}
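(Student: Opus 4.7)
The plan is to derive this corollary directly from the SAP case of the bifurcation lemma (Theorem~\ref{thm:bifur}(c)) by exhibiting, for each target partial inertia, a matrix $M$ arbitrarily close to $A$ with exactly that inertia. The preceding remark in the excerpt already shows that if $A = QDQ\trans$ is an orthogonal diagonalization, then $M = QD'Q\trans$ satisfies $\|M - A\| = \|D' - D\|$, so constructing $M$ reduces to constructing $D'$.

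First I would write $A = QDQ\trans$ with $Q$ orthogonal and $D$ diagonal. The hypothesis $a + b < n$ forces $n_0(A) = n - a - b \geq 1$, so $D$ has at least one zero diagonal entry; fix such an index $j$. For $t \in \R$, let $D_t$ be obtained from $D$ by replacing the $(j,j)$-entry with $t$, and set $M_t = QD_tQ\trans$. Then $\|M_t - A\| = |t|$, and for all sufficiently small $t > 0$ we have $\piner(M_t) = (a+1, b)$, while for all sufficiently small $t < 0$ we have $\piner(M_t) = (a, b+1)$, since replacing a single zero diagonal entry by a small nonzero number changes exactly one component of the partial inertia.

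Now I would apply Theorem~\ref{thm:bifur}(c): because $A$ has the SAP, there exists $\epsilon > 0$ such that whenever $\|M - A\| < \epsilon$ there is a matrix $A' \in \mptn(G)$ with $\piner(A') = \piner(M)$ and $A'$ possessing the SAP. Selecting $M = M_{\epsilon/2}$ realizes the partial inertia $(a+1, b)$ in $\mptn(G)$ with the SAP, and selecting $M = M_{-\epsilon/2}$ realizes $(a, b+1)$ similarly.

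There is essentially no obstacle here, as all the analytic content has been absorbed into the bifurcation lemma; the only verification required is the elementary claim that perturbing a single zero eigenvalue in the spectral decomposition adjusts the partial inertia in the desired direction, which follows from inspecting $D_t$ directly.
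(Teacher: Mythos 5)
Your proof is correct and follows essentially the same route the paper takes: the remark preceding the corollary in the paper already reduces the choice of $M$ to choosing a nearby diagonal $D'$ in the orthogonal diagonalization, and the corollary is then declared an immediate consequence of Theorem~\ref{thm:bifur}(c). You have simply spelled out the obvious choice of $D'$ (perturbing one zero diagonal entry by $\pm t$) that the paper leaves implicit.
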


Note that Corollary~\ref{cor:bifursappin} remains true if the assumption of SAP is dropped, but the resulting matrix $A'$ is not guaranteed to also have the SAP.  See \cite[Lemma~1.1]{BHL09}.

\begin{corollary}
\label{cor:bifursaprank}
Let $G$ be a graph on $n$ vertices and $A\in\mptn(G)$ a matrix with the SAP and $\rank(A) = r$.  Then for any $r'$ with $r\leq r'\leq n$ there is a matrix $A'\in\mptn(G)$ with the SAP and $\rank(A') = r'$.
\end{corollary}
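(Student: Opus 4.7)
The plan is a straightforward induction on $r' - r$, using Corollary~\ref{cor:bifursappin} as the workhorse. The base case $r' = r$ is trivial, since $A$ itself serves as $A'$. For the inductive step, I would observe that rank equals the sum of the partial inertia: if $A$ has SAP with $\piner(A) = (a,b)$ and $\rank(A) = r$, then $a + b = r$. Assuming we have already produced some $A_k \in \mptn(G)$ with the SAP and $\rank(A_k) = r + k < r'$, write $\piner(A_k) = (a_k, b_k)$ with $a_k + b_k = r + k$. Since $r + k < r' \leq n$, we have $a_k + b_k < n$, so Corollary~\ref{cor:bifursappin} applies and yields a matrix $A_{k+1} \in \mptn(G)$ with the SAP and partial inertia $(a_k + 1, b_k)$, whose rank is therefore $r + k + 1$. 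Iterating $r' - r$ times produces the desired $A' = A_{r' - r}$.

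The only mild subtlety is making sure that Corollary~\ref{cor:bifursappin} really can be applied at every stage of the induction, but this is handled by the strict inequality $r + k < r' \leq n$, which guarantees $a_k + b_k < n$ throughout. There is no analytic obstacle here, because Corollary~\ref{cor:bifursappin} has already packaged away the use of the bifurcation lemma and the inverse function theorem; we are merely exploiting the fact that it preserves SAP while incrementing partial inertia by $(1,0)$ (or, had we chosen, by $(0,1)$).

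Thus the main work of the proof is organizational, and I would not expect any hard step. In particular, one does not even need to decide in advance how to distribute the rank increase between the positive and negative parts of the inertia, since Corollary~\ref{cor:bifursappin} allows either direction at each stage; any consistent choice (say, always incrementing the first coordinate) works.
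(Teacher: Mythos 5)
Your proof is correct. The key observation you use, namely that for a real symmetric matrix $\rank(A) = n_+(A) + n_-(A)$, is valid, and the inequality $r + k < r' \leq n$ does guarantee that Corollary~\ref{cor:bifursappin} applies at every stage of the induction, so the iteration goes through without obstruction.

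Your route is, however, a genuine variant of what the paper intends. The paper derives Corollary~\ref{cor:bifursaprank} (together with the other three corollaries in that block) directly from Theorem~\ref{thm:bifur}(c): write $A = QDQ\trans$, perturb $r' - r$ of the zero diagonal entries of $D$ by amounts small enough that $\|QD'Q\trans - A\| < \epsilon$, and apply the bifurcation lemma once to the resulting $M = QD'Q\trans$ of rank $r'$. Your proof instead iterates the stable northeast lemma (Corollary~\ref{cor:bifursappin}), raising the rank by one at each step. Both arguments are correct and both ultimately rest on the bifurcation lemma; the paper's one-shot perturbation is more direct, while your iteration has the mild advantage of not re-invoking the analytic machinery of Theorem~\ref{thm:bifur} and instead packaging it entirely inside the already-stated stable northeast lemma. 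A minor aesthetic point: since Corollary~\ref{cor:bifursappin} is itself presented as an immediate consequence of the same theorem, the two proofs are close cousins rather than truly independent.
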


Similar to the comments after Corollary~\ref{cor:bifursappin}, the SAP in Corollary~\ref{cor:bifursaprank} can be removed.  That is, if $A\in\mptn(G)$ has rank $r$, then for any $r'$ with $r\leq r'\leq n$ there is a matrix $A'$ with $\rank(A') = r'$.  This is well-known by the following argument:  Pick an invertible matrix $B\in\mptn(G)$; then since $B$ can be obtained from $A$ by a sequence of rank-$1$ perturbations without leaving $\mptn(G)$, the ranks of matrices on the way from $A$ to $B$ include at least all the integers between $r$ to $n$.  The matrices resulting from this process are not, however, guaranteed to have the SAP.

\section{Applications}
\label{sec:app}

In this section we provide several consequences of the bifurcation lemma.

\subsection{Minimum number of distinct eigenvalues}

Recall that $q(A)$ is the number of distinct eigenvalues of $A$.  With the bifurcation lemma, each eigenvalue of high multiplicity can be divided into a pair of eigenvalues each with smaller multiplicity.  Therefore, one may increase the number of distinct eigenvalues by one without changing the pattern of the matrix.  

\begin{theorem}
\label{thm:qincrement}
Let $G$ be a graph on $n$ vertices and $A\in\mptn(G)$ a matrix with the SSP or the SMP, respectively.  Then for any $q'$ with $q(A) \leq q'\leq n$, there is a matrix $A'\in\mptn(G)$ with the SSP or the SMP, respectively,  such that $q(A') = q'$.
\end{theorem}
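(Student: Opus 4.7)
The plan is to reduce the theorem to a single-step claim: whenever $A' \in \mptn(G)$ has the SSP (resp.\ SMP) and $q(A') < n$, one can produce $A'' \in \mptn(G)$ with the SSP (resp.\ SMP) such that $q(A'') = q(A') + 1$. Iterating this construction at most $n - q(A)$ times then produces matrices realizing every value of $q'$ between $q(A)$ and $n$. The iteration is legitimate because at each stage the new matrix inherits the strong property, making it eligible for the next single-step application.

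For the SSP case, I would apply Corollary~\ref{cor:bifurssp}. Since $q(A) < n$, the multiset $\spec(A) = \{\lambda_1, \ldots, \lambda_n\}$ has a repetition, say $\lambda_i = \lambda_j$ with $i \neq j$. Pick $\delta > 0$ strictly smaller than the $\epsilon$ given by the corollary and (if $q(A) \geq 2$) strictly smaller than half the minimum gap between the distinct eigenvalues of $A$. Set $\mu_i = \lambda_i + \delta$ and $\mu_k = \lambda_k$ for $k \neq i$. Then $\mu_i$ differs from every other $\mu_k$, so the perturbed spectrum has exactly $q(A) + 1$ distinct values, and Corollary~\ref{cor:bifurssp} supplies the desired $A' \in \mptn(G)$ with the SSP and $\spec(A') = \{\mu_1, \ldots, \mu_n\}$.

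For the SMP case, I would invoke Corollary~\ref{cor:bifursmp}. Writing $\oml(A) = (m_1, \ldots, m_q)$ with $q = q(A) < n$, some entry $m_k$ must satisfy $m_k \geq 2$, and the sequence $(m_1, \ldots, m_{k-1}, 1, m_k - 1, m_{k+1}, \ldots, m_q)$ is a refinement of $\oml(A)$ of length $q+1$. The corollary then yields $A' \in \mptn(G)$ with the SMP whose ordered multiplicity list is this refinement, so $q(A') = q(A) + 1$.

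I do not expect any real obstacle here: the theorem is essentially a bookkeeping consequence of the two corollaries. The only delicate point is the choice of $\delta$ in the SSP case, where one must ensure that the single perturbation creates exactly one new distinct eigenvalue without introducing any spurious coincidence, and this is handled by the gap estimate above.
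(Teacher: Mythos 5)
Your proof is correct and follows the same route the paper intends: the paper states Theorem~\ref{thm:qincrement} without a formal proof, preceded only by the remark that the bifurcation lemma lets one split a high-multiplicity eigenvalue to raise $q$ by one, and your argument carefully carries out exactly that single-step increment via Corollaries~\ref{cor:bifurssp} and~\ref{cor:bifursmp} and iterates. (A minor remark: for the SMP case the iteration is unnecessary, since Corollary~\ref{cor:bifursmp} already delivers \emph{any} refinement, so one can pick a refinement of length $q'$ in one shot; but your version is equally valid and parallels the SSP case.)
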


It remains an open question whether the assumption of the SSP or the SMP in Theorem~\ref{thm:qincrement} can be dropped.

\subsection{Cycles}

Let $C_n$ be the cycle on $n$ vertices.  It is known \cite{Ferguson80} that a set of real numbers $\lambda_1\leq \cdots \leq \lambda_n$ is the spectrum of a matrix in $\mptn(C_n)$ if and only if 
\[\lambda_1 \leq \lambda_2 < \lambda_3 \leq \lambda_4 < \lambda_5 \leq \cdots \]
or 
\[\lambda_1 < \lambda_2 \leq \lambda_3 < \lambda_4 \leq \lambda_5 < \cdots .\]
In other words, the ordered multiplicity lists realizable by matrices in $\mptn(G)$ are the refinements of $(2,\ldots,2)$ or $(1,2,\ldots,2,1)$ when $n$ is even and are the refinements of $(2,\ldots,2,1)$ or $(1,2,\ldots,2)$ when $n$ is odd. 
We will see that these four maximal ordered multiplicity lists can be achieved with the SMP, so we have the following theorem.

\begin{theorem}
\label{thm:cyclesmp}
Every ordered multiplicity list that can be achieved by a matrix in $\mptn(C_n)$ can be achieved by a matrix in $\mptn(C_n)$ with the SMP.
\end{theorem}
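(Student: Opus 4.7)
My approach is to combine Corollary~\ref{cor:bifursmp} with four explicit constructions. As the paragraph preceding the theorem recalls, every OML realizable by a matrix in $\mptn(C_n)$ is a refinement of one of four maximal lists: $(1,2,\ldots,2,1)$ or $(2,\ldots,2)$ when $n$ is even, and $(2,\ldots,2,1)$ or $(1,2,\ldots,2)$ when $n$ is odd. By Corollary~\ref{cor:bifursmp}, if each of these four lists is realized by some matrix in $\mptn(C_n)$ with the SMP, then every refinement is realized with the SMP as well. Thus the task reduces to producing four explicit SMP witnesses.

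For the two ``periodic'' cases I would take $A_+$ to be the adjacency matrix of $C_n$ (the symmetric circulant with first row $(0,1,0,\ldots,0,1)$); its eigenvalues $2\cos(2\pi k/n)$ pair under $k \mapsto n-k$, giving OML $(1,2,\ldots,2,1)$ for even $n$ and $(2,\ldots,2,1)$ for odd $n$. For the two ``antiperiodic'' cases I would take $A_- \in \mptn(C_n)$ obtained from $A_+$ by negating the $(1,n)$ and $(n,1)$ entries; a standard diagonalization using antiperiodic Fourier modes shows its eigenvalues are $2\cos((2k+1)\pi/n)$, paired under $k \mapsto n-k-1$, giving OML $(2,\ldots,2)$ for even $n$ and $(1,2,\ldots,2)$ for odd $n$.

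The heart of the argument is verifying the SMP for each $A_\pm$, which amounts to ruling out a nonzero symmetric $X$ satisfying the four SMP constraints. I would pass to the orthonormal real Fourier basis diagonalizing $A_\pm$; since each eigenvalue has multiplicity at most two, the commutation $[A_\pm,X]=O$ forces $X$ to be block diagonal in this basis, with a scalar on each simple eigenspace and a $2\times 2$ symmetric block on each double eigenspace. Expanding $X_{ii}$ back in the standard basis expresses the diagonal as a Fourier-like sum: the scalar blocks contribute constants, while the traceless part of each $2\times 2$ block contributes an oscillation at frequency $4\pi k/n$ (for $A_+$) or $2(2k+1)\pi/n$ (for $A_-$). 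All these oscillatory frequencies are nonzero modulo $2\pi$, so by Fourier orthogonality the constraint $I\circ X = O$ forces every traceless part to vanish and yields at most one linear relation among the scalar block values. The $q$ trace conditions $\tr(A_\pm^k X)=0$ for $k=0,\ldots,q-1$ then form a Vandermonde system in the remaining scalars, which is nonsingular because the $q$ distinct eigenvalues are pairwise distinct, so $X=O$.

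The main obstacle is the case-by-case bookkeeping. The number and location of simple eigenspaces depends on whether $n$ is even or odd and whether we use $A_+$ or $A_-$, so one must verify separately in each of the four cases that the Fourier frequencies appearing on the diagonal of $X$ are indeed nonzero mod $n$. I expect this to reduce to noting that the frequencies $2k$ and $2(2k+1)$ are never $\equiv 0 \pmod{n}$ in the relevant range of $k$, which in turn follows from the parity of $n$ combined with the range restrictions. Once this bookkeeping is handled, the rest is a unified Fourier-plus-Vandermonde calculation.
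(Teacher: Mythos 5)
Your reduction---using Corollary~\ref{cor:bifursmp} to shrink the problem to exhibiting SMP witnesses for the four maximal ordered multiplicity lists---is exactly what the paper does. The difference is downstream: the paper cites prior constructions (\cite[Theorem~48]{gSAP} for $(2,\ldots,2)$ on even cycles, the decontraction lemma of \cite{IEPG2} to generate the remaining maximal lists, and known small-graph classifications for $C_3, C_4$), while you propose explicit circulant/antiperiodic witnesses $A_\pm$ and a direct Fourier verification of the SMP. Your choice of witnesses and OML computations are correct, and a self-contained verification would be a genuine improvement in transparency.

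However, there is a gap in your SMP verification. You claim that $I\circ X = O$ forces the traceless part of each $2\times 2$ block to vanish, on the grounds that the diagonal Fourier frequencies $4\pi k/n$ (resp.\ $2(2k+1)\pi/n$) are nonzero mod~$2\pi$. Being nonzero mod $2\pi$ only separates the oscillatory part from the constants; it does not make the oscillatory modes linearly independent among themselves. When $n$ is even, the digital frequencies $2k$ (for $A_+$) alias in pairs $2k \leftrightarrow n - 2k$, i.e.\ $k \leftrightarrow n/2 - k$, and similarly $2k+1 \leftrightarrow n - (2k+1)$ for $A_-$. Under this aliasing, the cosines coincide and the sines negate, so the diagonal constraint yields only $a_k = -a_{n/2 - k}$ and $b_k = b_{n/2-k}$ (resp.\ the analogous relations for $A_-$)---roughly $n/2$ equations on $n$ traceless unknowns, leaving a large nullspace. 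To close the argument you must also invoke the constraint $A\circ X = O$, which you never use: this forces the superdiagonal $X_{j,j+1}$ to vanish. A computation shows that $X_{jj}$ samples the Fourier sum at even arguments and $X_{j,j+1}$ samples it at odd arguments, so combining the two does give vanishing at every argument and hence $a_k = b_k = 0$. (When $n$ is odd the aliasing issue does not arise and your argument is already complete.) So the approach is salvageable, but as written the key claim about $I\circ X = O$ is false for even $n$ and the $A\circ X = O$ constraint must be brought in explicitly.
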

\begin{proof}
The IEP-$G$ has been solved for small graphs up to $5$ vertices; see \cite{IEPG2} and the references therein.  So we know $(2,1)$ and $(1,2)$ can be realized by matrices in $\mptn(C_3)$ with the SSP (so the SMP); we also know $(2,2)$ and $(1,2,1)$ can be realized by matrices in $\mptn(C_4)$ with the SSP (so the SMP).  

Let $k\geq 2$.  By \cite[Theorem~48]{gSAP}, for any even cycle $C_{2k}$ there is a matrix $A\in\mptn(C_{2k})$ with the SMP and $\oml(A) = (2,\ldots,2)$.  By the decontraction lemma \cite[Lemma~6.12]{IEPG2}, for any odd cycle $C_{2k+1}$ there are matrices $A_1'$ and $A_2'$ in $\mptn(C_{2k+1})$ with the SMP and $\oml(A_1') = (1,2,\ldots,2)$ and $\oml(A_2') = (2,\ldots,2,1)$.  By applying the decontraction lemma again to $A_1'$ or $A_2'$, for any even cycle $C_{2k+2}$, there is a matrix $A''\in\mptn(C_{2k+2})$ with the SMP and $\oml(A) = (1,2,\ldots,2,1)$.  Combining all these results, we know for $n\geq 5$, there are matrices with the SMP in $\mptn(C_n)$ realizing the ordered multiplicity lists $(2,\ldots,2)$ or $(1,2,\ldots,2,1)$ if $n$ is even and the lists $(1,2,\ldots,2)$ and $(2,\ldots,2,1)$ if $n$ is odd.

By the bifurcation lemma (Corollary~\ref{cor:bifursmp}), the desired result holds. 
\end{proof}


It is not yet known, however, whether every realizable spectrum of matrices in $\mptn(C_n)$ can be realized by a matrix in $\mptn(C_n)$ with the SSP.

\section{Not necessarily symmetric matrices}
\label{sec:nsym}

The idea of bifurcation also applies to matrices that are not necessarily symmetric and has applications to the study of sign patterns.  A \emph{sign pattern} $\sptn$ is an array whose entries are $+$, $-$, or $0$, and the \emph{qualitative class} $\qual(\sptn)$ is the set of all matrices of the same dimensions as $\sptn$ and the signs of whose entries are prescribed by $\sptn$.  We say $\sptn'$ is a \emph{superpattern} of $\sptn$ if $\sptn'$ can be obtained from $\sptn$ by replacing some $0$'s with $+$'s or $-$'s.  (Note that $\sptn$ is also a superpattern of itself.)

The terminology was introduced in \cite{DJOvdD00} that an $n\times n$ sign pattern $P$ is said to be a \emph{spectrally arbitrary pattern} if for each monic real-coefficient polynomial $p(x)$ of degree $n$, there is a matrix $A\in\qual(\sptn)$ whose characteristic polynomial is $p(x)$, or in other words that every set of $n$ complex numbers that is invariant under conjugation can be realized by a matrix $A\in\qual(\sptn)$.  

Let $\sptn$ be an $n\times n$ sign pattern and $A\in\qual(\sptn)$.  The nilpotent-centralizer method \cite{GB13} states that if 
\begin{itemize}
\item $A$ is a nilpotent matrix of index $n$ and 
\item $X = O$ is the only matrix satisfying $A\circ X = O$ and $[A,X\trans] = O$,
\end{itemize}
then any of its superpatterns is a spectrally arbitrary pattern.  In the following, we will build a more flexible version of this theorem and see that it follows the same flavor of the bifurcation lemma.

Let $\sptn = \begin{bmatrix} p_{ij} \end{bmatrix}$ be an $n\times n$ sign pattern.  
We define $\qualv(\sptn)$ as the set of matrices $A = \begin{bmatrix} a_{ij} \end{bmatrix}\in\Rmn$ such that $a_{ij} = 0$ if $p_{ij} = 0$ (and $a_{ij}$ can be any real number if $p_{ij}$ is $+$ or $-$).  The set $\qualv(\sptn)$ is a vector space; indeed, it is the tangent space of $\qual(\sptn)$ at any point.

\begin{definition}
Let $A$ be an $n\times n$ matrix.  Then $A$ has the \emph{non-symmetric strong spectral property} (nSSP) if $X = O$ is the only matrix satisfying $A\circ X = O$ and $[A,X\trans] = O$.
\end{definition}

\begin{remark}
The nSSP seemingly misses one condition $I\circ X = O$ comparing to the SSP.  However, both $A\circ X = I\circ X = O$ in the SSP and $A\circ X$ in the nSSP can be interpred as ``$X$ has to be zero on those potentially nonzero entries.''  From this point of view, the nSSP is a natural analogue of the SSP for non-symmetric matrices.
\end{remark}

\begin{theorem}[Bifurcation lemma for non-symmetric matrices]
\label{thm:bifurnsym}
Let $\sptn$ be an $n\times n$ sign pattern and  $A\in\qual(\sptn)$.  If $A$ has the nSSP, then there is $\epsilon > 0$ such that for any $M\in\Rmn$ with $\|M - A\| < \epsilon$, a matrix $A'\in\qual(\sptn)$ exists such that $A'$ and $M$ are similar and $A'$ has the nSSP. 
\end{theorem}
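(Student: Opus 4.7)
The plan is to adapt the proof of Theorem~\ref{thm:bifur} to the non-symmetric setting by replacing the orthogonal conjugation $X\mapsto e^{-K}Xe^K$ (with $K$ skew-symmetric) by the general similarity $X\mapsto (I+L)^{-1}X(I+L)$ (with $L\in\calB_n$), and the symmetric pattern space $\mptncl(G)$ by the tangent space $\qualv(\sptn)$. Concretely, define
\[
F \colon \qualv(\sptn)\times\calB_n \to \Rmn, \qquad F(B,L) = (I+L)^{-1}(A+B)(I+L).
\]
This is smooth on its domain, and $F(B,L)$ is similar to $A+B$ for every $(B,L)$.

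First I would compute $\dot{F}\big|_{(O,O)}$. The $B$-partial is the inclusion $\qualv(\sptn)\hookrightarrow\Rmn$, and using $\frac{d}{dt}(I+tL)^{-1}\big|_{t=0}=-L$, the $L$-partial is $L\mapsto AL-LA = [A,L]$. Hence $\dot{F}\big|_{(O,O)}(B,L) = B + [A,L]$, whose range is $\qualv(\sptn) + \{[A,L] : L\in\Rmn\}$.

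Next I would identify surjectivity of this map with the nSSP via orthogonal complements with respect to $\inp{X}{Y}=\tr(Y\trans X)$. Since $A\in\qual(\sptn)$ satisfies $A_{ij}\neq 0$ iff $p_{ij}\neq 0$, one checks $\qualv(\sptn)^\perp = \{X : A\circ X = O\}$. A short calculation using the cyclic property of the trace shows that the adjoint of $L\mapsto [A,L]$ is $Y\mapsto [A\trans,Y]$, so the orthogonal complement of its image is $\{Y : [A\trans,Y]=O\}=\{Y : [A,Y\trans]=O\}$ (by transposing). Thus the total orthogonal complement of the range of $\dot{F}\big|_{(O,O)}$ is $\{Y : A\circ Y=O,\ [A,Y\trans]=O\}$, which equals $\{O\}$ precisely when $A$ has the nSSP. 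This adjoint computation is the main step and the only slightly subtle point.

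Finally I would apply Theorem~\ref{thm:invftsur} to obtain an open neighborhood $W'\subseteq\Rmn$ of $A$ and a smooth right inverse $T\colon W'\to\qualv(\sptn)\times\calB_n$ with $T(A)=(O,O)$ and $F\circ T = \id_{W'}$. For $M\in W'$, write $(B',L')=T(M)$ and set $A':=A+B'$; then $(I+L')^{-1}A'(I+L')=M$, so $A'$ and $M$ are similar. Choosing $\epsilon$ small enough guarantees that (i)~$M\in W'$, (ii)~$A'\in\qual(\sptn)$, since $B'\in\qualv(\sptn)$ keeps the zero entries zero while continuity preserves the signs of the nonzero entries of $A$, and (iii)~$A'$ has the nSSP, because the nSSP is an open condition on $A$ (injectivity of the continuously varying map $X\mapsto (A\circ X,\,[A,X\trans])$ is open by lower semicontinuity of rank).
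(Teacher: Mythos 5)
Your proposal is correct and follows essentially the same route as the paper: same function $F(B,L)=(I+L)^{-1}(A+B)(I+L)$, same derivative computation identifying $\range(\dot F)=\qualv(\sptn)+\{[A,L]:L\in\Rmn\}$, same identification of the orthogonal complement with the nSSP condition (you phrase it via the adjoint $L\mapsto[A\trans,L]$, the paper does the trace computation directly, but these are the same calculation), and the same application of Theorem~\ref{thm:invftsur}. Your closing remark that the nSSP is open because injectivity of the continuously varying linear map $X\mapsto(A\circ X,[A,X\trans])$ is an open condition is a clean way to justify the last step, arguably slightly tighter than the paper's appeal to the derivative of $F$ remaining surjective at $(B',L')$.
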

\begin{proof}

Given a matrix $A\in\qual(\sptn)$, consider the function $F:\qualv(\sptn)\times \calB_n\rightarrow \Rmn$ defined by 
\[F(B, L) = (I + L)^{-1}(A + B)(I + L).\]
Let $\dot{F} = \dot{F}\Big|_{\substack{B = O\\L = O}}$.  By direct computation, the range of $\dot{F}$ is 
\[\qualv(\sptn) + \{-LA + AL : L\in\Rmn\}.\]
Note that in the vector space $\Rmn$, the orthogonal complement of $\qualv(\sptn)$ is \[\{X\in\Rmn: A\circ X = O\},\]
and the orthogonal complement of $\{-LA + AL : L\in\Rmn\}$ is 
\[\{X\in\Rmn: [A, X\trans] = O\}\]
since 

\[\begin{aligned}
0 = \inp{-LA + AL}{X} &= \tr(X\trans(-LA + AL)) \\
&= \tr(-X\trans LA + X\trans\!AL) \\ 
&= \tr(-L AX\trans + L X\trans\!A) \\
&= -\tr(L(AX\trans - X\trans\!A)) \\
&= -\inp{AX\trans - X\trans\!A}{L\trans},
\end{aligned}\]
for $L\in\Rmn$.
Therefore, $A$ has the nSSP if and only if $\dot{F}$ is surjective.  

By the inverse function theorem (Theorem~\ref{thm:invftsur}), there is $\epsilon > 0$ such that for any $M\in\Rmn$ with $\|M - A\| < \epsilon$, matrices $B'\in\qualv(\sptn)$ and $L'\in\Rmn$ exist with
\[F(B',L') = (I + L')^{-1}(A + B')(I + L') = M,\]
which means $A' = A + B'$ is similar to $M$.  When $\|M - A\|$ is sufficiently small, $\|B'\|$ is small and $A'\in\qual(\sptn)$.  Moreover, when the perturbation is sufficiently small, the derivative of $F$ at $B = B'$ and $L = L'$ remains surjective, so $A'$ has the nSSP.
\end{proof}

The function $F$ used in the bifurcation lemma (Theorem~\ref{thm:bifurnsym}) is a composition of $M\mapsto (I + L)^{-1}M(I + L)$ and $M\mapsto M + B$.  In the following, we will see by changing the order of the composition, we will get the superpattern lemma.  In short, the bifurcation lemma allows us to find a matrix with the same pattern but with a slightly different Jordan canonical form, while the superpattern allows us to find a matrix with the same Jordan canonical form but with a slightly different pattern.  Note that the superpattern lemma is a non-symmetric analogue of the supergraph lemma \cite[Theorem~10]{gSAP}.

\begin{theorem}[Superpattern lemma]
\label{thm:super}
Let $\sptn$ be an $n\times n$ sign pattern and  $A\in\qual(\sptn)$.  If $A$ has the nSSP, then for any superpattern $\sptn'$ of $\sptn$, a matrix $A'\in\qual(\sptn')$ exists such that $A'$ and $A$ are similar and $A'$ has the nSSP. 
\end{theorem}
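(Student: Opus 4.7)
The plan is to apply the inverse function theorem to a projected conjugation map, realizing the reversed composition suggested before the theorem: instead of perturbing within $\qualv(\sptn)$ and then conjugating as in Theorem~\ref{thm:bifurnsym}, we conjugate $A$ alone (fixing the Jordan form) and use the extra freedom of the larger pattern $\sptn'$ to absorb the off-pattern entries. Writing $\pi'$ for the orthogonal projection $\Rmn \to \qualv(\sptn')^\perp$, define $H : \calB_n \to \qualv(\sptn')^\perp$ by $H(L) = \pi'\bigl((I+L)^{-1}A(I+L)\bigr)$. Then $H(O) = O$ (since $A \in \qualv(\sptn) \subseteq \qualv(\sptn')$) and $\dot H|_{L=O}(L) = \pi'([A,L])$.

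First I would check that $\dot H|_{L=O}$ is surjective: the containment $\qualv(\sptn')^\perp \subseteq \qualv(\sptn)^\perp$ transports the nSSP hypothesis on $A$ to $\qualv(\sptn')^\perp \cap \{X : [A, X\trans] = O\} = \{O\}$, and the orthogonal-complement dictionary from the proof of Theorem~\ref{thm:bifurnsym} converts this to $\{[A,L] : L \in \Rmn\} + \qualv(\sptn') = \Rmn$. Invoking Theorem~\ref{thm:invftsur} in its implicit-function form (for instance applied to the auxiliary smooth map $(L_1, L_2) \mapsto (L_1, H(L_1 + L_2))$, with $L_1 \in \ker(\dot H|_{L=O})$ and $L_2$ in its orthogonal complement), the level set $H^{-1}(O)$ is a smooth submanifold of $\calB_n$ near $O$ with tangent space $\ker(\dot H|_{L=O}) = \{L : [A, L] \in \qualv(\sptn')\}$; for any direction $L_0$ in this kernel there is a smooth curve $L(t)$ in $H^{-1}(O)$ with $L(0) = O$ and $L'(0) = L_0$. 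The matrix $A'(t) := (I+L(t))^{-1}A(I+L(t))$ is then automatically similar to $A$ and lies in $\qualv(\sptn')$.

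The crux is choosing $L_0$ so that $A'(t) = A + t[A, L_0] + O(t^2)$ picks up the signs $\sigma_{ij}$ prescribed by $\sptn'$ at the extra positions $S = \{(i,j) : \sptn_{ij} = 0,\ \sptn'_{ij} \neq 0\}$. I would show that the map $\ker(\dot H|_{L=O}) \to \R^{|S|}$, $L \mapsto ([A,L]_{ij})_{(i,j) \in S}$, is surjective: given any target $(v_{ij})_{(i,j) \in S}$, assemble $v \in \qualv(\sptn')$ supported on $S$ and apply the nSSP decomposition $v = C + [A, L]$ with $C \in \qualv(\sptn)$; then $[A, L] = v - C \in \qualv(\sptn')$, so $L \in \ker(\dot H|_{L=O})$, and because $C$ vanishes on $S$, $[A, L]$ matches $v$ there. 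Choosing $L_0$ to realize the signs $(\sigma_{ij})$, the $S$-entries of $A'(t)$ become $t\sigma_{ij} + O(t^2)$, the existing nonzero entries preserve their sign, so $A'(t) \in \qual(\sptn')$ for all sufficiently small $t > 0$; the nSSP transfers to $A'(t)$ by its openness as a condition on matrix entries. The principal obstacle is this surjectivity claim, where the structural content of the nSSP is essential: one uses not merely that the nSSP is open but that $\qualv(\sptn)$ together with $\{[A, L]\}$ spans $\Rmn$, in order to prescribe $[A, L]$ freely on the extra positions.
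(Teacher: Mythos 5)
Your proof is correct, but it follows a genuinely different route from the paper's. The paper re-uses the function $F(B,L)=(I+L)^{-1}A(I+L)+B$, which has the same range of $\dot F$ as in Theorem~\ref{thm:bifurnsym}, and hence is a submersion at $(O,O)$. It then simply writes down the target $M$ (equal to $A$ plus $\pm s$ at the positions of $S$) and invokes Theorem~\ref{thm:invftsur} to produce $(B',L')$ with $F(B',L')=M$; setting $A'=M-B'$ gives a conjugate of $A$ whose entries at $S$ are untouched by $B'\in\qualv(\sptn)$, so the sign check is automatic. Your approach instead keeps the conjugation pure, projects onto $\qualv(\sptn')^\perp$ to obtain a level set $H^{-1}(O)$ whose points parametrize conjugations of $A$ landing inside $\qualv(\sptn')$, and then must additionally establish that the linear map $\ker(\dot H)\to\R^{|S|}$, $L\mapsto([A,L]_{ij})_{(i,j)\in S}$, is surjective so that a curve direction with the prescribed signs exists. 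That extra surjectivity step is exactly what the paper's decomposition $A'=M-B'$ handles for free: since $B'\in\qualv(\sptn)$ vanishes on $S$, the entries of $A'$ on $S$ agree with those of $M$ by construction. Both proofs rest on the same linear-algebraic content of the nSSP, namely $\qualv(\sptn)+\{[A,L]:L\in\Rmn\}=\Rmn$, but the paper's solve-for-$(B',L')$ packaging is shorter, while your level-set-and-curve packaging makes the geometry of the constrained conjugation manifold explicit. Both are valid.
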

\begin{proof}
Given a matrix $A\in\qual(\sptn)$, consider the function $F:\qualv(\sptn)\times \calB_n\rightarrow \Rmn$ defined by 
\[F(B, L) = (I + L)^{-1}A(I + L)  + B.\]
Let $\dot{F} = \dot{F}\Big|_{\substack{B = O\\L = O}}$.  By the arguments in the proof of Theorem~\ref{thm:bifurnsym}, $A$ has the nSSP if and only if $\dot{F}$ is surjective.

By the inverse function theorem (Theorem~\ref{thm:invftsur}), there is $\epsilon > 0$ such that for any $M\in\Rmn$ with $\|M - A\| < \epsilon$, matrices $B'\in\qualv(\sptn)$ and $L'\in\Rmn$ with
\[F(B',L') = (I + L')^{-1}A(I + L') + B' = M\]
exist, which means $A' = M - B'$ is similar to $A$.  We may choose $M$ as the matrix obtained from $A$ by adding $+s$ or $-s$ at the extra nonzero entries in $\sptn'$ but not in $\sptn$.  The quantity $s$ can be chosen sufficiently small so that $\|M - A\| < \epsilon$.  Since $B'$ is a small perturbation on the nonzero entries of $A$, $A' = M - B'$ is a matrix in $\qual(\sptn')$.  When the perturbation is sufficiently small, the derivative of $F$ at $B = B'$ and $L = L'$ remains surjective, so $A'$ has the nSSP.
\end{proof}

With the bifurcation lemma and the superpattern lemma, we are ready to prove Corollary~\ref{cor:fNC}, a more flexible version of the nilpotent-centralizer method.  Note that in Corollary~\ref{cor:fNC}, the nilpotent matrix is no longer required to have index $n$.  Instead, all that is required is that the spectra within some neighborhood of a nilpotent matrix include all sufficiently small spectra that are invariant under conjugation. The following known result will allow us to show that every nilpotent matrix meets this requirement.

\begin{theorem}
\label{thm:realform}
{\rm \cite[Theorem~2.3.4]{HJ85}}
If $A \in \Rmn$, there is an orthogonal matrix $Q \in \Rmn$ such that 
\[Q\trans\!A Q = 
\begin{bmatrix}
 A_1 & ~ & ~ & ~ \\
 ~ & A_2 & \multicolumn{2}{c}{\smash{\raisebox{.5\normalbaselineskip}{\Large $*$}}} \\
 ~ & ~ & \ddots & ~ \\
 \multicolumn{2}{c}{\smash{\raisebox{0.5\normalbaselineskip}{\Large $O$}}} & ~ & A_k
\end{bmatrix} 
\in \Rmn,
\]
where $1\leq k\leq n$ and each $A_i$ is a real $1\times 1$ matrix, or a real $2\times 2$ matrix with a non-real pair of complex conjugate eigenvalues.  The diagonal blocks $A_i$ may be arranged in any prescribed order. 
\end{theorem}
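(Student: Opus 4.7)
The plan is to prove this by induction on $n$, giving a refined version of the real Schur decomposition. The base case $n = 1$ is immediate. For the inductive step the essential observation is that the eigenvalues of $A$, viewed in $\mathbb{C}$, are either real or occur in complex conjugate pairs, so we can always peel a $1\times 1$ or $2\times 2$ block off the top-left by an orthogonal similarity.

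To carry out the peeling, I would consider the block $A_1$ we wish to place at the top-left. If $A_1$ is a $1\times 1$ block with a prescribed real eigenvalue $\lambda$ of $A$, choose a real unit eigenvector $\bv$ for $\lambda$, extend $\{\bv\}$ to an orthonormal basis of $\R^n$ via Gram--Schmidt, and form the orthogonal matrix $Q_0$ whose first column is $\bv$. Then
\[Q_0\trans A Q_0 = \begin{bmatrix} \lambda & * \\ \bzero & A' \end{bmatrix}\]
for some $A' \in \operatorname{Mat}_{n-1}(\R)$. If instead $A_1$ is a $2\times 2$ block with a prescribed non-real conjugate pair $a \pm bi$ (with $b \neq 0$), take a complex eigenvector $\bx + i\by$ for $a + bi$; then $A\bx = a\bx - b\by$ and $A\by = b\bx + a\by$, so $\vspan_\R(\bx,\by)$ is a $2$-dimensional $A$-invariant subspace of $\R^n$. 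Applying Gram--Schmidt to $\bx, \by$ yields an orthonormal basis of this subspace, which we extend to an orthonormal basis of $\R^n$. The resulting orthogonal matrix $Q_0$ satisfies
\[Q_0\trans A Q_0 = \begin{bmatrix} A_1 & * \\ O & A' \end{bmatrix},\]
with $A_1$ a real $2\times 2$ block whose spectrum is $\{a \pm bi\}$.

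I would then apply the inductive hypothesis to $A'$ with the remaining prescribed ordering $A_2, \ldots, A_k$ of blocks, obtaining an orthogonal $Q' \in \operatorname{Mat}_{n-\dim A_1}(\R)$ that brings $A'$ to the desired block upper triangular form. Setting $Q = Q_0 \cdot \bigl(I_{\dim A_1} \oplus Q'\bigr)$ yields the required overall orthogonal similarity.

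The main obstacle is ensuring that the prescribed-order refinement really goes through the induction: one must verify that after peeling off $A_1$, the deflated matrix $A'$ still contains exactly the multiset of eigenvalues required to realize $A_2, \ldots, A_k$. This follows because similarity preserves the characteristic polynomial and $\spec(A)$ decomposes (with multiplicity) as $\spec(A_1) \cup \spec(A')$, so the remaining spectrum is precisely what is needed. A secondary technical point is to check, in the $2\times 2$ case, that $\bx$ and $\by$ are linearly independent over $\R$ (which is guaranteed by $b \neq 0$, for otherwise $\bx + i\by$ would be a real eigenvector of a non-real eigenvalue), so that Gram--Schmidt genuinely produces a two-dimensional subspace.
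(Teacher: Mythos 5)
The paper does not prove this theorem; it simply cites it as \cite[Theorem~2.3.4]{HJ85}, the real Schur form. Your inductive deflation argument is the standard textbook proof of that result and is correct: peeling off a real eigenvalue gives a $1\times 1$ block, peeling off a non-real conjugate pair via the real and imaginary parts of a complex eigenvector gives a real $2\times 2$ invariant block, and the prescribed block order is respected by choosing at each step which eigenvalue (or conjugate pair) to deflate, with the multiset identity $\spec(A) = \spec(A_1) \cup \spec(A')$ guaranteeing the induction goes through.
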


\begin{corollary}
\label{cor:nearbynil}
Let $A\in\Rmn$ be nilpotent and let $\epsilon > 0$ be given.
Then for any complex multiset $\Lambda = \{\lambda_1,\ldots,\lambda_n\}$, invariant under conjugation, with 
\[\sum_{i=1}^n|\lambda_i|^2 <  \epsilon^2,\]
a matrix $M$ exists with $\|M - A\| < \epsilon$ and $\spec(M) = \Lambda$.  
\end{corollary}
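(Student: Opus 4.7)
Since $A$ is nilpotent, every eigenvalue of $A$ is zero, so no $2\times 2$ block can appear on the diagonal of a real Schur form of $A$ (such blocks carry non-real conjugate pairs). Theorem~\ref{thm:realform} therefore yields an orthogonal matrix $Q$ with $N := Q\trans\!A Q$ strictly upper triangular. Because the Frobenius norm is invariant under orthogonal similarity, it suffices to produce a real matrix $M'$ with $\spec(M') = \Lambda$ and $\|M' - N\| < \epsilon$; the desired $M$ is then $Q M' Q\trans$.

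The plan is to write $\Lambda$ as $r$ real eigenvalues $\lambda_1,\ldots,\lambda_r$ together with $s$ complex conjugate pairs $\mu_j \pm i\nu_j$ ($\nu_j > 0$), so that $r + 2s = n$. Partition $\{1,\ldots,n\}$ into $r$ leading singletons followed by $s$ consecutive pairs, in that order. I would define $M'$ to coincide with $N$ on every off-block-diagonal entry; place $\lambda_k$ in the $k$-th singleton block and, in the $j$-th pair block, put a $2\times 2$ real matrix $B_j$ of trace $2\mu_j$ and determinant $\mu_j^2 + \nu_j^2$ (hence with eigenvalues $\mu_j \pm i\nu_j$). Since $N$ is strictly upper triangular and the blocks are arranged in increasing order, $M'$ is automatically block upper triangular with respect to this partition, so $\spec(M')$ is the disjoint union of the spectra of the diagonal blocks, which is precisely $\Lambda$.

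The crux is a two-by-two estimate: for every $n \in \R$, $\mu \in \R$, and $\nu > 0$ there is a real $2\times 2$ matrix $B$ with eigenvalues $\mu \pm i\nu$ such that
\[
\Bigl\| B - \begin{pmatrix} 0 & n \\ 0 & 0 \end{pmatrix}\Bigr\|^2 \le 2\mu^2 + 2\nu^2.
\]
Parametrizing $B = \begin{pmatrix} \mu & b \\ -\nu^2/b & \mu \end{pmatrix}$ (which automatically has the right trace and determinant), the left-hand side equals $2\mu^2 + (b-n)^2 + \nu^4/b^2$. I would prove the bound by a case split on $|n|$ versus $\nu$: for $|n| \ge \nu$ the choice $b = n$ gives $(b-n)^2 + \nu^4/b^2 = \nu^4/n^2 \le \nu^2$, while for $|n| \le \nu$ the choice $b = \operatorname{sign}(n)\cdot\nu$ (taking $b = \nu$ when $n=0$) gives $(|n|-\nu)^2 + \nu^2 \le 2\nu^2$. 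Combining the singleton contributions $\lambda_k^2$ with these pair contributions yields
\[
\|M' - N\|^2 \le \sum_{k=1}^r \lambda_k^2 + \sum_{j=1}^s (2\mu_j^2 + 2\nu_j^2) = \sum_{i=1}^n |\lambda_i|^2 < \epsilon^2,
\]
so that $\|M - A\| < \epsilon$ as required.

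The main obstacle is precisely this two-by-two estimate. The entry $n = N_{i,i+1}$ is inherited from $A$'s Schur form and can be arbitrarily large or small relative to $\nu$, so no single choice of $b$ in the one-parameter family $\{B_b\}$ works uniformly for every $n$. The case split interpolates between \emph{respecting} the existing entry (the branch $b = n$, good when $|n|$ dominates $\nu$) and \emph{overwriting} it with a rotation-style block (the branch $b = \pm\nu$, good when $|n|$ is small), and the two branches meet favorably at the switch point $|n| = \nu$.
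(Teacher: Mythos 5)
Your proof is correct and follows essentially the same route as the paper's: reduce to strictly upper triangular form via the real Schur decomposition, perturb only the diagonal blocks (singletons for real eigenvalues, $2\times 2$ blocks for conjugate pairs), and control the pair contribution by a case split on whether the inherited superdiagonal entry dominates the imaginary part $\nu$. Your one-parameter family $B_b$ and the switch at $|n|=\nu$ reproduce precisely the paper's three cases (rotation block when $|x|\le b$, entry-preserving block when $|x|>b$), so the two arguments are the same in substance.
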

\begin{proof}
Since $\|M - A\| = \|Q\trans M Q - Q\trans A Q\|$ for any orthogonal matrix $Q$,
by Theorem~\ref{thm:realform} we may assume without loss of generality that $A$ is strictly upper triangular, with all diagonal entries equal to $0$.
For each real eigenvalue $\lambda_i$ the perturbation of a $1 \times 1$ diagonal block
from
$[0] \mbox{ in } A \mbox{ to } [\lambda_i] \mbox{ in } M$
contributes
exactly $|\lambda_i|^2$ to $\|M - A\|^2$.
For each non-real conjugate pair $\{\lambda_i, \lambda_j\}$ whose real part is $a$,
and the absolute value of whose imaginary part is $b > 0$,
the perturbation of some $2 \times 2$ diagonal block
\[
\mbox{ from }
\begin{bmatrix}
0 & x \\
0 & 0
\end{bmatrix} \mbox{ in } A\ \ 
\begin{cases}
  \mbox{ to }
  \begin{bmatrix}
    a & -b \\
    b & a
  \end{bmatrix} \mbox{ in } M, & -b \le x < 0 \\
  ~&~ \\
  \mbox{ to }
  \begin{bmatrix}
    a & b \\
    -b & a
  \end{bmatrix} \mbox{ in } M, & 0 \le x \le b \\
  ~&~ \\
  \mbox{ to }
  \begin{bmatrix}
    a & x \\
    \frac{-b^2}{x} & a
  \end{bmatrix} \mbox{ in } M, & |x| > b
\end{cases}
\]
contributes no more than $|\lambda_i|^2 + |\lambda_j|^2 = 2a^2 + 2b^2$
to $\|M - A\|^2$ in every case, regardless of the value of $x$.
It follows that $\|M - A\|^2 \le \epsilon^2$ holds, and thus $\|M - A\| < \epsilon$,
for the perturbed matrix $M$ with spectrum $\Lambda$.
\end{proof}




\begin{corollary}
\label{cor:fNC}
Let $\sptn$ be an $n\times n$ sign pattern and $A\in\qual(\sptn)$.  If $A$ is nilpotent and has the nSSP, then any superpattern of $\sptn$ is a spectrally arbitrary pattern.
\end{corollary}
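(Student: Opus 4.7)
The plan is to combine three ingredients in sequence: the superpattern lemma to move from $\sptn$ into an arbitrary superpattern $\sptn'$ while preserving nilpotency and the nSSP; Corollary~\ref{cor:nearbynil} to realize any sufficiently small conjugation-invariant multiset as the spectrum of a matrix near the nilpotent; the non-symmetric bifurcation lemma to pull such nearby spectra back into $\qual(\sptn')$; and finally a homogeneous scaling trick to upgrade ``small spectra'' to ``arbitrary spectra.'' Unlike the classical nilpotent-centralizer method, we do not require $A$ to have index $n$, because Corollary~\ref{cor:nearbynil} guarantees that the entire neighborhood of spectra around $0$ is accessible from any nilpotent matrix.

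Concretely, fix a superpattern $\sptn'$ of $\sptn$. By Theorem~\ref{thm:super} applied to $A$, there exists $A' \in \qual(\sptn')$ that is similar to $A$ and has the nSSP; in particular $A'$ is again nilpotent. Now apply Theorem~\ref{thm:bifurnsym} to $A'$ to obtain $\epsilon > 0$ such that for every $M \in \Rmn$ with $\|M - A'\| < \epsilon$, some $A'' \in \qual(\sptn')$ is similar to $M$, and hence $\spec(A'') = \spec(M)$. Given any conjugation-invariant multiset $\Lambda_0 = \{\mu_1, \ldots, \mu_n\}$ satisfying $\sum_{i=1}^n |\mu_i|^2 < \epsilon^2$, Corollary~\ref{cor:nearbynil} (with $A'$ in place of $A$) provides a matrix $M$ with $\|M - A'\| < \epsilon$ and $\spec(M) = \Lambda_0$; combined with the bifurcation step, we obtain $A'' \in \qual(\sptn')$ with $\spec(A'') = \Lambda_0$.

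To finish, consider an arbitrary conjugation-invariant multiset $\Lambda = \{\lambda_1, \ldots, \lambda_n\}$ of complex numbers, corresponding to an arbitrary monic real-coefficient polynomial of degree $n$. Pick $t > 0$ large enough that $\sum_{i=1}^n |\lambda_i/t|^2 < \epsilon^2$, and apply the previous paragraph to $\Lambda_0 = \{\lambda_1/t, \ldots, \lambda_n/t\}$ to obtain $A'' \in \qual(\sptn')$ with $\spec(A'') = \Lambda_0$. Since $t > 0$ preserves the sign of every entry, $tA'' \in \qual(\sptn')$, and $\spec(tA'') = \Lambda$. Thus $\sptn'$ is a spectrally arbitrary pattern.

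The only real subtlety is the scaling step: the bifurcation lemma alone delivers only spectra near $\spec(A') = \{0, \ldots, 0\}$, and it is the homogeneity of $\qual(\sptn')$ under positive scalar multiplication, together with the fact that $\spec(tA'') = t \cdot \spec(A'')$, that boosts this to arbitrary spectra. The rest is a bookkeeping chain through results already established in the paper; no delicate estimate is needed beyond choosing $t$ large enough to fit the target spectrum into the $\epsilon$-ball provided by the bifurcation lemma.
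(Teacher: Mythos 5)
Your proof is correct and uses the same ingredients as the paper's: the non-symmetric bifurcation lemma (Theorem~\ref{thm:bifurnsym}), Corollary~\ref{cor:nearbynil}, the superpattern lemma (Theorem~\ref{thm:super}), and positive scaling. The only structural difference is that you apply the superpattern lemma once at the outset to move into $\qual(\sptn')$ and then realize all spectra there, whereas the paper first establishes that $\sptn$ itself is spectrally arbitrary and only then invokes the superpattern lemma for each realizing matrix; both orderings are valid and yours is, if anything, the slightly cleaner bookkeeping.
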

\begin{proof}
By the bifurcation lemma for non-symmetric matrices (Theorem~\ref{thm:bifurnsym}), there is $\epsilon > 0$ such that for any $M$ with $\|M - A\| < \epsilon$, a matrix $A'\in\qual(\sptn)$ exists such that $A'$ and $M$ are similar and $A'$ has the nSSP.    



By Corollary~\ref{cor:nearbynil}, for any set of complex numbers $\Lambda = \{\lambda_1,\ldots,\lambda_n\}$, invariant under conjugation, with 
\[\sum_{i=1}^n|\lambda_i|^2 <  \epsilon^2,\]
a matrix $M$ exists with $\|M - A\| < \epsilon$ and $\spec(M) = \Lambda$.  

Combining the above results, $\sptn$ allows any sufficiently small spectrum that is invariant under conjugation.  By scaling $A'$ to $kA'$, $\sptn$ allows any spectrum that is invariant under conjugation.  Moreover, by the superpattern lemma (Theorem~\ref{thm:super}), any superpattern of $\sptn$ is also a spectrally arbitrary pattern since $\sptn$ is a spectrally arbitrary pattern and the matrices $A$, $A'$, and $kA'$ have the nSSP.
\end{proof}

This leads to a theorem developed by Pereira~\cite{Pereira07}.  Recall that a sign pattern is \emph{full} if each of its entries is nonzero.  Also, every matrix in the qualitative class of a full sign pattern naturally has the nSSP by definition. 

\begin{corollary}
\label{cor:Pereira07}
{\rm \cite[Theorem~1.2]{Pereira07}}
If $\sptn$ is a full $n\times n$ sign pattern and $\qual(\sptn)$ contains a nilpotent matrix, then $\sptn$ is a spectrally arbitrary pattern.
\end{corollary}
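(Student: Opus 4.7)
The plan is to recognize that this is essentially an immediate specialization of Corollary~\ref{cor:fNC}, once one observes that the nSSP condition becomes vacuous for any matrix in the qualitative class of a full sign pattern. So there is very little to do: I would apply Corollary~\ref{cor:fNC} to the given nilpotent matrix, with $\sptn$ itself playing the role of the superpattern.

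First I would verify the nSSP hypothesis. Let $A = [a_{ij}] \in \qual(\sptn)$ be the nilpotent matrix provided by the assumption. Because $\sptn$ is full, every $a_{ij}$ is nonzero. Now suppose $X = [x_{ij}]$ is any matrix with $A \circ X = O$. Entry by entry this reads $a_{ij} x_{ij} = 0$, and since $a_{ij} \neq 0$, we conclude $x_{ij} = 0$ for all $i,j$. Thus $X = O$ is the only matrix satisfying the first of the two conditions in the definition of the nSSP, so a fortiori $A$ has the nSSP; the commutator condition $[A, X\trans] = O$ is not even needed.

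With $A$ now known to be a nilpotent matrix in $\qual(\sptn)$ possessing the nSSP, I would invoke Corollary~\ref{cor:fNC}: every superpattern of $\sptn$ is a spectrally arbitrary pattern. Since $\sptn$ is a superpattern of itself, $\sptn$ is spectrally arbitrary, which is the desired conclusion.

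There is no real obstacle here; the entire content of the proof is the observation that fullness of $\sptn$ forces the Hadamard condition $A \circ X = O$ alone to kill $X$, so the nontrivial part of the nSSP never has to be checked. All the actual work (nearby-nilpotent perturbations, scaling, superpattern extension) has already been done inside Corollary~\ref{cor:fNC}.
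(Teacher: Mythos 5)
Your proof is correct and matches the paper's intended argument: the paper's own remark preceding the corollary is precisely that every matrix in the qualitative class of a full sign pattern trivially has the nSSP (since $A\circ X=O$ alone forces $X=O$), after which Corollary~\ref{cor:fNC} applies immediately.
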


\begin{remark}
Comparing to the nilpotent-centralizer method, Corollary~\ref{cor:fNC} provides a more flexible condition for finding a spectrally arbitrary pattern since the nilpotent matrix $A$ is not necessarily of index $n$.  However, as we can see later in Corollary~\ref{cor:indexn}, a sign pattern $\sptn$ can be verified as a spectrally arbitrary pattern by the nilpotent-centralizer method if and only if $\sptn$ can be verified as a spectrally arbitrary pattern by Corollary~\ref{cor:fNC}.
\end{remark}

\begin{theorem}
\label{thm:realJordan}
{\rm \cite[Theorem~3.1.11]{HJ85}}
Let $A\in\Rmn$ be a matrix with only real eigenvalues.  Then there is a nonsingular matrix $Q\in\Rmn$ such that $A = QJQ^{-1}$, where $J$ is the Jordan canonical form of $A$.
\end{theorem}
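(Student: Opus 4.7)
The plan is to reduce to the complex Jordan canonical form and then use a rationality/genericity argument to upgrade a complex similarity to a real one. Over $\mathbb{C}$ the Jordan theorem gives a nonsingular $P\in\operatorname{Mat}_n(\mathbb{C})$ with $AP = PJ$, and because every eigenvalue of $A$ is real, the Jordan form $J$ has real entries. So the task reduces to producing a real nonsingular intertwiner between $A$ and $J$.

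The key step is to split $P = P_1 + iP_2$ with $P_1,P_2\in\Rmn$ and observe that $AP = PJ$ is a complex linear equation with real coefficients (both $A$ and $J$ are real). Comparing real and imaginary parts gives $AP_1 = P_1 J$ and $AP_2 = P_2 J$. Hence for every $t\in\mathbb{R}$ the matrix $Q(t) = P_1 + tP_2$ also satisfies $AQ(t) = Q(t)J$. All that remains is to pick a real $t$ for which $Q(t)$ is invertible.

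To do this, consider the polynomial $p(t) = \det(P_1 + tP_2)$, which is a polynomial of degree at most $n$ in $t$ with real coefficients. Since $p(i) = \det(P_1 + iP_2) = \det(P)\neq 0$, the polynomial $p$ is not identically zero. A nonzero polynomial over $\mathbb{R}$ has only finitely many roots in $\mathbb{R}$, so there exists $t_0\in\mathbb{R}$ with $p(t_0)\neq 0$; taking $Q = Q(t_0)$ yields a real nonsingular matrix with $A = QJQ^{-1}$.

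The main obstacle, if one wants to avoid invoking the complex Jordan theorem, would be constructing real Jordan chains by hand through generalized eigenspace decomposition and iterated application of $(A-\lambda I)$; the cleaner path above sidesteps this by bootstrapping from the complex case, so the only real technical point is the observation that the determinant polynomial $p(t)$ is nonzero because it is nonzero at $t = i$. I would write the proof along this short route rather than redeveloping the theory of generalized eigenspaces in the real setting.
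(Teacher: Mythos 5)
The paper does not prove this statement; it is quoted verbatim from Horn and Johnson as an external reference, so there is no in-paper proof to compare against. Your argument is nevertheless a correct and self-contained proof, and it follows one of the two standard routes to this fact: obtain the complex Jordan form $J$ (automatically real here, since every eigenvalue is real) together with a complex intertwiner $P$, split $P=P_1+iP_2$ to get a real pencil $Q(t)=P_1+tP_2$ of intertwiners, and then observe that the real polynomial $p(t)=\det Q(t)$ cannot vanish identically because $p(i)=\det P\neq 0$, so some real $t_0$ gives a nonsingular real $Q(t_0)$. All the steps check: equating real and imaginary parts in $AP=PJ$ is legitimate precisely because $A$ and $J$ are real, and the finitely-many-roots argument correctly upgrades ``not identically zero'' to ``nonzero at some real point.'' The alternative route, which a text such as Horn and Johnson typically develops, is to build real Jordan chains directly from the (real) generalized eigenspaces $\ker(A-\lambda I)^{k}$ and avoid the detour through $\mathbb{C}$; that approach is longer but yields the basis constructively, whereas your bootstrap from the complex case is shorter and uses the well-known lemma that two real matrices similar over $\mathbb{C}$ are already similar over $\mathbb{R}$. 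Either is perfectly acceptable as a citation-free replacement for the reference.
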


\begin{corollary}
\label{cor:indexn}
Let $\sptn$ be a sign pattern.  If there is a nilpotent matrix $A\in\qual(\sptn)$ with the nSSP, then there is a nilpotent matrix $A'\in\qual(\sptn)$ of index $n$ and with the nSSP.
\end{corollary}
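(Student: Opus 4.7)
The plan is to reduce the statement to the bifurcation lemma for non-symmetric matrices (Theorem~\ref{thm:bifurnsym}) applied to the hypothesized $A$. That lemma supplies an $\epsilon > 0$ such that any $M$ with $\|M - A\| < \epsilon$ is similar to some $A' \in \qual(\sptn)$ possessing the nSSP. Since similar matrices share their Jordan type, it is enough to exhibit a single nilpotent matrix $M$ of index $n$ with $\|M - A\| < \epsilon$; the corresponding $A'$ returned by the bifurcation lemma is then automatically nilpotent of index $n$.

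To construct such an $M$, I would first invoke Theorem~\ref{thm:realJordan} (applicable because the nilpotent matrix $A$ has only the real eigenvalue $0$) to write $A = P J P^{-1}$ with $J$ in Jordan canonical form, consisting of nilpotent Jordan blocks of sizes $k_1,\ldots,k_r$ arranged along the diagonal. If $r = 1$ then $A$ itself is already of index $n$ and there is nothing to prove. Otherwise, with $s_j = k_1 + \cdots + k_j$, I define the perturbation $R = \sum_{j=1}^{r-1} E_{s_j,\,s_j+1}$, which inserts one new entry on the superdiagonal at each of the $r-1$ block boundaries. For any $\delta > 0$, the matrix $J + \delta R$ is strictly upper triangular with every superdiagonal entry nonzero (value $1$ inside each Jordan block, value $\delta$ at each boundary), and a diagonal similarity $D = \operatorname{diag}(d_1,\ldots,d_n)$ determined by $d_{i+1}/d_i = 1/(J+\delta R)_{i,i+1}$ rescales it to the standard single nilpotent Jordan block $N_n$. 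Thus $J + \delta R$ is similar to $N_n$, and in particular is nilpotent of index $n$.

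Setting $M = P(J + \delta R)P^{-1} = A + \delta\, P R P^{-1}$, one has $\|M - A\| = \delta\,\|P R P^{-1}\|$, which can be made less than $\epsilon$ by shrinking $\delta$, while $M$ is similar to $N_n$. The bifurcation lemma then delivers the required $A'$. The only delicate point is verifying that the perturbed matrix $J + \delta R$ lands exactly in the single-block similarity class rather than in some intermediate Jordan type; this is precisely what the explicit diagonal similarity to $N_n$ confirms, since it rests only on every superdiagonal entry being nonzero. Everything else is bookkeeping: the application of Theorem~\ref{thm:realJordan}, the control of the perturbation norm through the fixed conjugator $P$, and the invocation of Theorem~\ref{thm:bifurnsym}.
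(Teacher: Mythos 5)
Your proposal is correct and follows essentially the same route as the paper's proof: invoke Theorem~\ref{thm:realJordan} to see that nilpotent matrices of index $n$ are dense among nilpotent matrices, then apply the bifurcation lemma (Theorem~\ref{thm:bifurnsym}). The only difference is that the paper asserts the density claim in one sentence, whereas you spell out the explicit superdiagonal perturbation $J + \delta R$ and the diagonal similarity to $N_n$ that justify it.
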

\begin{proof}
According to Theorem~\ref{thm:realJordan}, every $n\times n$ nilpotent matrix has a nilpotent matrix of index $n$ that is arbitrarily close to it.  Thus, the desired result follows from the bifurcation lemma for non-symmetric matrices (Theorem~\ref{thm:bifurnsym}).
\end{proof}

The nilpotent-Jacobian method introduced in \cite{DJOvdD00} is another method for verifying a spectrally arbitrary pattern, and it was developed earlier than the nilpotent-centralizer method.  Roughly speaking, the nilpotent-Jacobian method considers a map $f$ from $n$ nonzero entries on an $n\times n$ nilpotent matrix $A\in\qual(\sptn)$ to the $n$ non-leading coefficients of the characteristic polynomial $\det(xI - A)$, and it states that if the derivative of $f$ is surjective (equivalently, invertible) then any superpattern of $\sptn$ is spectrally arbitrary.  On  the one hand, for the nilpotent-centralizer method, it was shown that if $A$ is a nilpotent matrix of index $n$ and $A$ has the nSSP, then the map $f$ with the desired properties for the nilpotent-Jacobian method exists \cite{GB13}.  On the other hand, in \cite{BvMvT11}, it was shown that such a map $f$ exists only when the nilpotent matrix $A$ has index $n$.  In summary, when $A$ is a nilpotent matrix, the map $f$ for the nilpotent-Jacobian method exists if and only if $A$ is of index $n$ and $A$ has the nSSP.  From this point of view, the nilpotent-Jacobian method and the nilpotent-centralizer method are somehow equivalent, while the later one avoid the hassle of finding the appropriate $n$ entries.  Example~\ref{ex:fNC} provides some reasoning of why the previous two methods require the nilpotency of index $n$ while Corollary~\ref{cor:fNC} only needs the nilpotency.

\begin{example}
\label{ex:fNC}
Let 
\[A = \begin{bmatrix}
 -1 & 1 & -1 \\
 -2 & 2 & -2 \\ 
 -1 & 1 & -1 \\ 
\end{bmatrix} \text{ and }
B = \begin{bmatrix}
x_{11} & x_{12} & x_{13} \\
x_{21} & x_{22} & x_{23} \\
x_{31} & x_{32} & x_{33} \\
\end{bmatrix}.\]
Then $A$ is a nilpotent matrix of index $2$, which is smaller than the size of $A$.  Let $\det(xI - (A + B)) = c_0 + c_1x + c_2x^2 + x^3$ such that $c_0,c_1,c_2$ are polynomials in variables in $B$.  Let $f$ be the map $B\mapsto (c_0,c_1,c_2)$.  In particular, 
\[c_0 = \det(-(A + B)) = (-1)^3\det(A + B).\] 
Notice that at $B=O$, no single variable in $B$ is able to change the value of $\det(A + B)$ since $\rank(A) = 1$ and all $2\times 2$ submatrices are singular.  Therefore, the derivative of $c_0$ at $B = O$ with respect to any $x_{ij}$ is $0$, and the derivative of $f$ is not surjective.  Consequently, the nilpotent-Jacobian method and the nilpotent-centralizer method do not apply in this case.  

In contrast, the variables in $B$ naturally perturb $A$ into any matrices nearby $A$, so Corollary~\ref{cor:fNC} still applies.
\end{example}

The bifurcation lemma also has application to the possible inertia sets of a sign pattern.  Let $A$ be an $n\times n$ matrix.  The \emph{inertia} of a not necessarily symmetric $A$ is 
\[\iner(A) = (n_+(A), n_-(A), n_0(A)),\]
where $n_+(A)$, $n_-(A)$, and $n_0(A)$ are the number of eigenvalues with positive, negative, and zero real part.  
The \emph{refined inertia} was considered in \cite{KOSvdDvdHvM09} and is defined as 
\[\riner(A) = (n_+(A), n_-(A), n_z(A), 2n_p(A)),\]
where $n_+(A)$ and $n_-(A)$ are the defined as those in the inertia while $n_z(A)$ and $2n_p(A)$ are the number of zero eigenvalues and the number of nonzero pure-imaginary eigenvalues, respectively.  Also, an $n\times n$ sign pattern is called an \emph{inertially arbitrary pattern} if it can realize all possible inertia $(n_+, n_-, n_0)$ with $n_+ + n_- + n_0 = n$.

\begin{corollary}
\label{cor:finer}
Let $\sptn$ be an $n\times n$ sign pattern and $A\in\qual(\sptn)$.  If $n_0(A) = n$, $n_z(A) \geq 2$, and $A$ has the nSSP, then $\sptn$ is an inertially arbitrary pattern.
\end{corollary}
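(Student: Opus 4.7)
The plan is to combine the bifurcation lemma for non-symmetric matrices (Theorem~\ref{thm:bifurnsym}) with an explicit perturbation of $A$ in the real block-triangular form supplied by Theorem~\ref{thm:realform}. The goal is to show that for any target inertia $(n_+,n_-,n_0)$ with $n_+ + n_- + n_0 = n$, there exists a matrix $M$ arbitrarily close to $A$ in Frobenius norm whose inertia is $(n_+,n_-,n_0)$; the bifurcation lemma then delivers an $A' \in \qual(\sptn)$ similar to $M$, hence with the same inertia.

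First, I would apply Theorem~\ref{thm:realform} to get an orthogonal $Q$ such that $Q\trans\! A Q$ is block upper triangular with $n_z$ scalar diagonal blocks equal to $[0]$ (for the zero eigenvalues) and $n_p = (n - n_z)/2$ diagonal blocks $A_i$ of size $2\times 2$, each with $\spec(A_i) = \{\pm i b_i\}$ for some $b_i > 0$. Since the Frobenius norm is invariant under orthogonal conjugation, it suffices to perturb the diagonal blocks of $Q\trans\! A Q$, leave the strictly block-upper-triangular part unchanged, and conjugate back.

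Next, I would construct the perturbation one diagonal block at a time. Each scalar block $[0]$ is replaced by $[\delta]$, $[-\delta]$, or $[0]$, contributing $1$ to $n_+$, $n_-$, or $n_0$ respectively. Each $2 \times 2$ pair block $A_i$ is replaced by $A_i + \delta I_2$, whose spectrum becomes $\{\delta \pm i b_i\}$, contributing $(2,0,0)$, $(0,2,0)$, or $(0,0,2)$ according to the sign of $\delta$. The Frobenius norm of the resulting perturbation is $O(|\delta|)$, so it can be made smaller than the $\epsilon$ supplied by Theorem~\ref{thm:bifurnsym} by choosing $\delta$ small enough.

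The key combinatorial step is to verify that every target $(n_+, n_-, n_0)$ with $n_+ + n_- + n_0 = n$ is realizable by such a decomposition. Writing $n_+ = 2p_+ + z_+$, $n_- = 2p_- + z_-$, and $n_0 = 2p_0 + z_0$ with non-negative integers satisfying $p_+ + p_- + p_0 = n_p$ and $z_+ + z_- + z_0 = n_z$, feasibility reduces to the inequality $o \leq n_z$, where $o \in \{0,1,2,3\}$ denotes the number of odd entries among $n_+, n_-, n_0$. Since $o \equiv n_+ + n_- + n_0 = n \equiv n_z \pmod 2$ and $o \leq 3$, the hypothesis $n_z \geq 2$ yields $o \leq n_z$ in every case.

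The main obstacle I anticipate is precisely this parity analysis, which is also where the hypothesis $n_z \geq 2$ enters essentially. Splitting a pair block into two real eigenvalues of opposite signs (an inertia contribution of $(1,1,0)$) would require a perturbation of magnitude at least $b_i$, hence not small; so the zero blocks alone must absorb any odd parity of the target inertia, and $n_z \geq 2$ is the precise condition ensuring that the zero blocks can match both possible parities dictated by $n_+ + n_- + n_0 = n$.
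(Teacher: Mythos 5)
Your proof is correct and uses the same mechanism as the paper's: reduce via Theorem~\ref{thm:bifurnsym} to constructing an $M$ close to $A$ in Frobenius norm with the target inertia, and build $M$ by perturbing the diagonal blocks of a real block-triangular form. The difference is in how the combinatorial feasibility is argued. The paper organizes it greedily, splitting on whether $2n_p(A)\geq p$, allocating pure-imaginary pairs first and topping up with zero eigenvalues, then repeating for the negative count; it also does not explicitly invoke Theorem~\ref{thm:realform} in this proof (it reserves that explicit construction for Corollary~\ref{cor:nearbynil}). Your parity analysis --- writing $n_i = 2p_i + z_i$ and showing realizability reduces exactly to $o\le n_z$, where $o\equiv n\equiv n_z\pmod 2$ forces $o\in\{0,2\}$ when $n_z$ is even and $o\in\{1,3\}$ when $n_z$ is odd --- is tighter, isolates precisely why $n_z\geq 2$ is the right threshold, and makes explicit a point the paper leaves implicit: an arbitrarily small perturbation cannot split a nonzero pure-imaginary conjugate pair into eigenvalues with real parts of opposite sign, so each such pair must contribute an even amount to each component of the inertia, and the zero eigenvalues alone must absorb the parity of the target.
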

\begin{proof}
According to the bifurcation lemma for non-symmetric matrices (Theorem~\ref{thm:bifurnsym}), a zero eigenvalue can be perturbed into any nearby real eigenvalue, while a conjugate pair of nonzero pure-imaginary eigenvalues can be perturbed into two eigenvalues whose real parts are of the same sign.  

Suppose $(p,q,n-p-q)$ is the target inertia.  If $2n_p(A) \geq p$, then we perturb $\left\lfloor\frac{p}{2}\right\rfloor$ conjugate pairs of nonzero pure-imaginary eigenvalues to have positive real parts, and, if in addition $p$ is odd, perturb a zero eigenvalue be a positive eigenvalue.  If not, we perturb all $n_p(A)$ conjugate pairs of nonzero pure-imaginary eigenvalues to have positive real parts, and perturb $p - 2n_p(A)$ zero eigenvalues to be positive.  After doing similar perturbations to make $q$ eigenvalues with negative real parts, we see that $\sptn$ is an inertially arbitrary pattern.
\end{proof}

\section{Concluding remarks}

The idea of the bifurcation lemma was motivated by some of the work in \cite{Arnold71}, and the same condition as the nSSP is mentioned in \cite{Arnold71} for the study of versal deformation and in \cite{Laffey98} for extreme nonnegative matrices.  
In most of recent studies of the strong properties, e.g., \cite{gSAP, IEPG2}, the implicit function theorem is used for  building a new symmetric matrix with additional nonzero entries.  The technique that we have here employed to prove the bifurcation lemma is to use the inverse function theorem instead, without creating more nonzero entries.  As we have seen, the notion of bifurcation works well with various strong properties and applies for both the symmetric and the non-symmetric cases.  It is foreseeable that this technique can be used in other circumstances, such as new strong properties, matrices with zero diagonal, and so on.

\section{Acknowledgements}
Shaun M. Fallat was supported in part by an NSERC Discovery Research Grant, Application No.: RGPIN--2019--03934.
Jephian C.-H. Lin was supported by the Young Scholar Fellowship Program (grant no.\ MOST-110-2628-M-110-003) from the Ministry of Science and Technology in Taiwan.



\end{document}